\newtheorem{theorem*}{Theorem}
\newtheorem{theorem}{Theorem}
\newtheorem{corollary}[theorem]{Corollary}
\newtheorem{lemma}[theorem]{Lemma}
\newtheorem{proposition}[theorem]{Proposition}
\newtheorem{remark}[theorem]{Remark}
\newtheorem{prop}[theorem]{Proposition}
\begin{document}

\title{Leibniz algebras with an abelian subalgebra of codimension two}

\thanks{
This work was supported by the Spanish Government, Ministry of Universities grant `Margarita Salas', funded by the European Union - NextGenerationEU {and by Junta de Andalucía, Consejería de Universidad, Investigación e Innovación: ProyExcel\_00780 ``Operator theory: An interdisciplinary approach"}.
}

\author[Amir Fernández Ouaridi]{A. Fernández Ouaridi}
\address{Amir Fernández Ouaridi. \newline \indent University of Cádiz, Department of Mathematics, Puerto Real (Spain).}
\email{{\tt amir.fernandez.ouaridi@gmail.com}}

\author{D. A. Towers}
\address{David A. Towers
\newline \indent Dept. of Mathematics and Statistics,
Lancaster University, Lancaster LA1 4YF (England)}
 \email{d.towers@lancaster.ac.uk}


\thispagestyle{empty}

\begin{abstract}
A characterization of the finite-dimensional Leibniz algebras with an abelian  subalgebra of codimension two over a field $\mathbb{F}$ of characteristic $p\neq2$ is given. In short, a finite-dimensional Leibniz algebra of dimension $n$ with an abelian subalgebra of codimension two is solvable and contains an abelian ideal of codimension at most two or it is a direct sum of a Lie one-dimensional solvable extension of the Heisenberg algebra $\mathfrak{h}(\mathbb{F})$ and $\mathbb{F}^{n-4}$ or a direct sum of a  $3$-dimensional simple  Lie algebra and $\mathbb{F}^{n-3}$ or a Leibniz one-dimensional solvable extension of the algebra $\mathfrak{h}(\mathbb{F}) \oplus \mathbb{F}^{n-4}$.

\bigskip

{\it 2020MSC}: 17A32, 17B05, 17B20, 17B30.

{\it Keywords}: Leibniz algebra, Lie algebra, abelian subalgebra, abelian ideal.
\end{abstract}

\maketitle

\section{Introduction}

Abelian subalgebras of Lie and Leibniz algebras play a key role in their structure. In fact, they have been a matter of study for a long time. Characterizing arbitrary Lie and Leibniz algebras of finite dimension with an abelian subalgebra of a fixed codimension $k$ is a cumbersome problem which can barely be addressed when the codimension is small enough. For example, if $k$ is one it is known that the Lie or Leibniz algebra must be solvable and must contain an abelian ideal of codimension one, see \cite{CNT15, BC12, CT23}. Likewise, a characterization of the Lie algebras (and Poisson algebras) with an abelian subalgebra of codimension two was given in \cite{FNT24}. Obtaining a similar characterization for the case of Leibniz algebras with an abelian subalgebra of codimension two is an interesting question which we tackle in this manuscript.

In this context, it is natural to consider the invariants $\alpha$ and $\beta$, corresponding respectively to the dimensions of abelian subalgebras and ideals of maximal dimension of a given algebra.
The systematic study of these invariants for Lie and Leibniz algebras has been pursued since the work by Burde and Ceballos \cite{BC12}. Let us briefly recall the principal results in this sense first.
Let $L$ denote a Lie algebra (or a Leibniz algebra) of dimension $n$. Maximal subalgebras that are abelian of Leibniz algebras over an algebraically closed field have codimension one, see \cite{CT23}. For solvable Lie algebras over an algebraically closed field of characteristic zero, it is known that $\alpha(L) = \beta(L)$, see \cite{BC12}. The computation of $\alpha$ and $\beta$ for complex Lie and Leibniz algebras of small dimension was given in \cite{Ceballos, CT23}. Over an arbitrary field of characteristic $p\neq2$, if $L$ is a Leibniz algebra and $\alpha(L)=n-1$, then $\beta(L)=n-1$. Moreover, if $L$ is a supersolvable Lie algebra or a nilpotent Leibniz algebra such that $\alpha(L)=n-2$, then $\beta(L)=n-2$, see  \cite{CT23}. Furthermore, if $L$ is a nilpotent or a supersolvable Lie algebra such that $\alpha(L)=n-3$, then $\alpha(L)=n-3$ (for $p\neq 2$), see \cite{CT14, T22}. If $L$ is a nilpotent Lie algebra  and $\alpha(L)=n-4$, then $\beta(L)=n-4$ (for $p\neq 2, 3, 5$), see \cite{T22}. Also, some of the mentioned results obtained for Leibniz algebras were later extended to Leibniz superalgebras in \cite{BCFN24}.

\medskip

The main purpose of this paper is to prove the following characterization of the Leibniz algebras with an abelian subalgebra of codimension two.

\begin{theorem*}\label{mainthm}
    Let $L$ be a Leibniz algebra of dimension $n$ over an arbitrary field $\mathbb{F}$ of characteristic $p\neq 2$ with an abelian subalgebra of codimension two. Then $L$ is solvable and contains an abelian ideal of codimension $k\leq 2$ or we have one of the following three situations
    \begin{enumerate}    
        \item $L$ is a $3$-step solvable Lie algebra and $L \cong \mathfrak{c}(m)\oplus \mathbb{F}^{n-4}$, where $m\in \mathfrak{sl}_2(\mathbb{F})$  is irreducible.  Moreover, $L^2 = \mathfrak{h}(\mathbb{F})$ is the Heisenberg algebra and $C(L) \cong \mathbb{F}^{n-4} + L^{(3)}$
        is the unique abelian ideal of maximal dimension  $n-3$.

        \item $L$ is an almost simple Lie algebra and $L \cong  \mathfrak{d}(m)\oplus \mathbb{F}^{n-3}$, where $m\in \mathfrak{sl}_2(\mathbb{F})$. Moreover, $C(L) \cong  \mathbb{F}^{n-3}$ is the unique abelian ideal of maximal dimension  $n-3$.

        \item $L$ is a $3$-step solvable algebra and     
        $L \cong  \mathfrak{e}(\varphi, \vartheta, v, n)$ for certain $v\in  I_L$ and $\varphi \in \textrm{Der}_l(B)$ such that the induced map $\varphi^* = \vartheta^*:B/C(B)\rightarrow B/C(B)$ is irreducible, where $B:=\textrm{Nil}(L)\cong \mathfrak{h}(\mathbb{F}) \oplus \mathbb{F}^{n-4}$. Moreover, $C(B)$ is the unique abelian ideal of maximal dimension $n-3$.
    \end{enumerate}
    The definitions of the algebras $\mathfrak{c}, \mathfrak{d}, \mathfrak{e}$ can be found in section~\ref{sec2} and section~\ref{sec3}.
\end{theorem*}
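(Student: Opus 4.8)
The plan is to recast the statement as a comparison between the two invariants: writing $\alpha(L)=n-2$ for the given abelian subalgebra $A$, the claim is equivalent to showing that the largest abelian ideal satisfies $\beta(L)\geq n-3$, with equality $\beta(L)=n-3$ occurring exactly in the three listed families. I would organise the argument around the dichotomy solvable versus non-solvable and, inside the solvable case, around whether $L$ is a Lie algebra or genuinely Leibniz, so that the already-established Lie classification of \cite{FNT24} carries the Lie part and only the Leibniz-specific phenomena require fresh work.

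For the non-solvable case, I would first use a Levi-type decomposition to write the semisimple part as a Lie algebra $S$ on which the codimension-two abelian subalgebra restricts to an abelian subalgebra of codimension at most two inside $S$. Since a simple Lie algebra of dimension greater than three always contains an abelian subalgebra of codimension larger than two, $S$ must be a three-dimensional simple Lie algebra, a form of $\mathfrak{sl}_2(\mathbb{F})$. A short argument then shows that the radical is central and that $L$ is an almost simple Lie algebra; identifying the form via $m\in\mathfrak{sl}_2(\mathbb{F})$ gives $L\cong\mathfrak{d}(m)\oplus\mathbb{F}^{n-3}$ with $C(L)\cong\mathbb{F}^{n-3}$, which is case (2).

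In the solvable case I would pass to the maximal Lie quotient. If the Leibniz kernel $I_L$ is zero then $L$ is a solvable Lie algebra with $\alpha(L)=n-2$, and \cite{FNT24} yields either an abelian ideal of codimension at most two or the three-step solvable algebra $\mathfrak{c}(m)\oplus\mathbb{F}^{n-4}$ of case (1). If $I_L\neq 0$, I would verify that $\bar A=(A+I_L)/I_L$ remains abelian of codimension at most two in $\bar L=L/I_L$, apply \cite{FNT24} to $\bar L$, and then reconstruct $L$ as a central extension by $I_L$. The decisive structural fact to extract here is that the nilradical $B=\textrm{Nil}(L)$ is either abelian (giving at once an abelian ideal of codimension $\leq 2$) or isomorphic to $\mathfrak{h}(\mathbb{F})\oplus\mathbb{F}^{n-4}$; in the Heisenberg case the extending element acts through left and right multiplications $\varphi,\vartheta\in\textrm{Der}_l(B)$ whose common induced map on $B/C(B)$ must be irreducible, since any invariant line would produce an abelian ideal of larger dimension. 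This is precisely the data defining $\mathfrak{e}(\varphi,\vartheta,v,n)$, with the parameter $v\in I_L$ recording the Leibniz twist, and yields case (3).

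The step I expect to be the main obstacle is this last reconstruction in the genuinely Leibniz case: one must pin down the extension cocycle relating $L$ to $\bar L$, show that the left and right actions of the extending element on the Heisenberg-plus-centre nilradical are governed by a single parameter $m$ via $\varphi^*=\vartheta^*$, and then prove that irreducibility of this induced map is both necessary and sufficient for landing in $\mathfrak{e}$ rather than possessing an abelian ideal of codimension $\leq 2$. Establishing the sharpness $\beta(L)=n-3$ in each of the three families---equivalently, ruling out any abelian ideal of codimension one or two slipping through---reduces to a derivation computation on $\mathfrak{h}(\mathbb{F})\oplus\mathbb{F}^{n-4}$, and this bookkeeping is where the characteristic hypothesis $p\neq 2$ is genuinely used.
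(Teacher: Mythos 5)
Your proposal has two genuine gaps, both stemming from the fact that the theorem is asserted over an \emph{arbitrary} field of characteristic $p\neq 2$, not over a field of characteristic zero. First, your non-solvable case rests on a Levi-type decomposition, but Levi's theorem fails in positive characteristic even for Lie algebras (and Barnes' Leibniz version also requires characteristic zero), so the semisimple subalgebra $S$ you want to split off need not exist. Compounding this, your key claim that a simple Lie algebra of dimension greater than three admits no abelian subalgebra of codimension two is unsupported: over fields of prime characteristic there is no usable classification of simple Lie algebras to invoke, and this claim is essentially a fragment of the theorem being proved, so appealing to it is circular. The paper avoids both problems entirely: it never decomposes $L$ into semisimple plus radical, but instead splits on whether the abelian subalgebra $A$ is a \emph{maximal subalgebra} or sits inside a codimension-one subalgebra $B$; the non-solvable possibility then emerges only at the end of elementary bracket computations (Proposition~\ref{maxlie}(3) and Lemma~\ref{lemia3}, which shows the relevant configuration forces $L$ to be a Lie algebra), after which the Lie-algebra results of \cite{FNT24} and \cite{A76b} are cited.

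Second, your reconstruction of the genuinely Leibniz solvable case from $\bar L = L/I_L$ is flawed because $I_L$ is \emph{not} central: for a left Leibniz algebra one has $[I_L,L]=0$ but $[L,I_L]$ can be nonzero (e.g.\ the cyclic Leibniz algebra with $[a,a]=b$, $[a,b]=b$, where $[a,I_L]=I_L\neq 0$), so $L$ is not a central extension of $\bar L$ and the extension theory you sketch does not apply as stated. Moreover, even where $\bar L$ has an abelian ideal of codimension at most two by \cite{FNT24}, its preimage in $L$ contains $I_L$ and need not be abelian, so the dichotomy does not transfer back along the quotient; this lifting problem is precisely why the paper works directly inside $L$, running a case analysis on whether $I_L\subseteq A$, whether $M(A)$ acts nilpotently, and whether $B$ is an ideal (Lemmas~\ref{lemia2}, \ref{lemia3}, \ref{lemia1}), and using \cite[Theorem 2.1]{CT23} and \cite[Theorem 3.5]{CT23} rather than any quotient-and-lift argument. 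Your identification of the target structure (nilradical $\mathfrak{h}(\mathbb{F})\oplus\mathbb{F}^{n-4}$, irreducible induced map, the algebras $\mathfrak{e}(\varphi,\vartheta,v,n)$) matches the paper's conclusion, but the route you propose to reach it would need to be replaced by, or substantially supplemented with, direct computations in $L$ itself.
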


In summary, a Leibniz algebra of dimension $n$ over a field of characteristic $p\neq2$ with an abelian subalgebra of codimension two has an abelian ideal of codimension at most two 
or it is a direct sum of a Lie one-dimensional solvable extension of the algebra $\mathfrak{h}(\mathbb{F})$ and $n-4$ copies of the field or a direct sum of a 
$3$-dimensional simple  Lie algebra and $n-3$ copies of the field or a Leibniz one-dimensional solvable extension of the algebra $\mathfrak{h}(\mathbb{F}) \oplus \mathbb{F}^{n-4}$. In particular, if the field is quadratically closed, the situations (1) and (2) are not possible. Note that our result generalizes \cite[Theorem 3.5]{CT23}, which considers solvable Leibniz algebras, to the general case.  The proof of this theorem is divided over the next two sections in the following way. Let $L$ be a Leibniz algebra with an abelian subalgebra $A$ of codimension two, then we have two possible situations, either $A$ is a maximal subalgebra, considered in section~\ref{sec2}, or it is not, considered in section~\ref{sec3}. A detailed examination of this two cases is given in this paper. Some of our arguments use linear algebra results that involve commuting linear operators, which can be consulted in \cite{ST68}.

\subsection{Notation}
 Let us fix some notation. Recall that a (left) Leibniz algebra is a vector space $L$ endowed with a bilinear multiplication $[\cdot, \cdot]:L\rightarrow L$ satisfying the Leibniz rule $[x,[y,z]]=[[x,y],z]+[y, [x,z]]$. For a introduction to Leibniz algebras, we refer to \cite{AOR20}. We use the term abelian subalgebra to refer to a zero subalgebra. For $x\in L$, we denote by $L_x:L\rightarrow L$ (resp. $R_x:L\rightarrow L$) the linear operator of left multiplication given by $L_x(y)=[x, y]$ (resp. $R_x(y)=[y, x]$). We denote by $I_L$ the vector space generated by the elements $[x, x]$ for $x\in L$, then $[I_L, L] = 0$. The space $I_L$ is an ideal and it is zero if and only if $L$ is a Lie algebra. 
 We denote by $C(L)$ the center of a Leibniz algebra $L$ and by $C_{L}(A)$ the centralizer of a subalgebra $A$ of $L$. The normalizer of a subalgebra $A$ will be denoted by $N(A)$. The radical of $L$ is denoted by $R(L)$, the nilradical of $L$ is denoted by $\textrm{Nil}(L)$ and  the space $\left\{x\in L: [x, L] = 0\right\}$ is denoted by $\textrm{Ann$_{\ell}$}(L)$. The derived series is denoted $L^{(k+1)} := [L^{(k)}, L^{(k)}]$ with $L^{(0)} := L$. Finally, we denote by $\alpha(L)$ and $\beta(L)$ the dimension of an abelian subalgebra and ideal of maximal dimension in $L$, respectively.

\section{Abelian subalgebras which are maximal}
\label{sec2}

Recall that by \cite[Proposition 2.2]{CT23}, if the base field is algebraically closed,  then any abelian subalgebra which is a maximal subalgebra has codimension one. Thus, we may assume the field is not algebraically closed in this section. Moreover, recall that in the case when a Leibniz algebra contains an abelian subalgebra of codimension one, then it contains an abelian ideal of codimension one too, as it was proved in \cite[Theorem 2.1]{CT23}, assuming the field has characteristic $p\neq2$. 

\subsection{General results}
    Let $L$ be a Leibniz algebra of dimension $n$ with a subalgebra $A$. We denote by $M(A)$ the vector space generated by the maps $L_a$ with $a\in A$. Note that $M(A)$ is a subalgebra of the Lie algebra $\mathfrak{gl}_n(\mathbb{F})$. The following useful lemma is a consequence of the Fitting decomposition.

\begin{lemma}\label{lem1}
    Let $L$ be a non-abelian Leibniz algebra of dimension $n$. If $A$ is an abelian subalgebra of codimension $k>1$ which is maximal, then 
    there is a subspace $L_1\subsetneq L$ such that $L=A\oplus L_1$ and $[A, L_1]= L_1$.
\end{lemma}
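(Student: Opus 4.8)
The plan is to realize the asserted decomposition as the Fitting decomposition of $L$ with respect to the family $M(A)$ of left multiplications by $A$. First I would record the two structural facts that make this work. Since each $L_a$ satisfies the left Leibniz identity $L_a[x,y]=[L_ax,y]+[x,L_ay]$, every $L_a$ with $a\in A$ is a derivation of $L$; and since $[L_a,L_b]=L_{[a,b]}=0$ for $a,b\in A$ (because $A$ is abelian), the operators in $M(A)$ commute pairwise. Thus $M(A)$ is a commutative subalgebra of $\mathfrak{gl}(L)$ consisting of commuting derivations.

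Next I would invoke the simultaneous Fitting decomposition for this commuting family, writing $L=L_0\oplus L_1$, where $L_0$ is the Fitting-null component (the set of $x$ on which every $L_a$ acts nilpotently) and $L_1$ is the Fitting-one component; both are invariant under all $L_a$. The defining property of the Fitting-one component---valid over an arbitrary field through the factorization of the characteristic polynomial of each $L_a$ as $t^{m}q(t)$ with $q(0)\neq 0$---gives $[A,L_1]=\sum_{a\in A}L_a(L_1)=L_1$, which is exactly the second assertion. It then remains to identify $L_0$ with $A$. I would pin $L_0$ down by two inclusions: on one hand $A\subseteq L_0$, since $L_a(a')=[a,a']=0$ for all $a,a'\in A$ places every element of $A$ in the common kernel; on the other hand $L_0$ is a subalgebra, because the generalized product rule $L_a^N[x,y]=\sum_i\binom{N}{i}[L_a^ix,L_a^{N-i}y]$ for the derivation $L_a$ shows the null component of a single $L_a$ is closed under the bracket, and intersecting over $a\in A$ gives the same for $L_0$. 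As $A$ is maximal and $A\subseteq L_0\subseteq L$, this forces $L_0=A$ or $L_0=L$; in the first case we are done, taking $L_1$ as above (and $L_1\subsetneq L$ since $A\neq 0$).

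The main obstacle is therefore to rule out $L_0=L$, i.e. to exclude the possibility that $A$ acts nilpotently on all of $L$. This is precisely where the hypotheses that $L$ is non-abelian and that $A$ is maximal \emph{of codimension $k>1$} must enter. Here I would argue that the commuting nilpotent family $M(A)$ would then act nilpotently on the nonzero quotient module $L/A$ of dimension $k>1$, so that Engel's theorem yields a vector $v\in L\setminus A$ with $[A,v]\subseteq A$.

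The delicate point, and where I expect the real work to lie, is that---unlike in the Lie case---producing from such a $v$ an honest subalgebra strictly between $A$ and $L$ requires controlling right multiplications as well, since $N_{\ell}(A)=\{x:[A,x]\subseteq A\}$ need not be closed under the bracket in a Leibniz algebra. I would handle this by exploiting the identities $[x,y]+[y,x]\in I_L$ and $[I_L,L]=0$, which express right multiplication by $A$ in terms of left multiplication modulo the central ideal $I_L$; this lets one enlarge $A$ by $v$ (and, if necessary, by $I_L$) into a proper subalgebra properly containing $A$, contradicting maximality. Reconciling the two-sided Leibniz multiplication with the one-sided Fitting data in this final step is the crux of the argument; once $L_0=L$ is excluded, maximality gives $L_0=A$ and hence $L=A\oplus L_1$ with $[A,L_1]=L_1$, as claimed.
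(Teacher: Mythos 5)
Your proposal follows essentially the same route as the paper's proof: commuting left multiplications $L_a$, the Fitting decomposition $L = L_0 \oplus L_1$ relative to $M(A)$, the inclusion $A \subseteq L_0$, and a contradiction with maximality obtained from a vector $v \notin A$ with $[A,v] \subseteq A$. Everything up to the last step is correct (your observation that $L_0$ is a subalgebra, via the Leibniz formula for powers of a derivation, is sound, though the paper bypasses it: it derives the contradiction from \emph{any} $x \in L_0 \setminus A$, which rules out $L_0 \supsetneq A$ directly rather than only $L_0 = L$).

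However, the final step has a genuine gap, exactly at the point you flag as the crux: the case $I_L \not\subseteq A$. Your plan is to enlarge $A$ by $v$ ``and, if necessary, by $I_L$'' into a \emph{proper} subalgebra strictly containing $A$. But if $I_L \not\subseteq A$, then $A + I_L$ is itself a subalgebra (since $I_L$ is an ideal) strictly containing $A$, so maximality already forces $A + I_L = L$; hence $A + \mathbb{F}v + I_L = L$ as well, and no contradiction arises from that enlargement. The missing ideas are the following. From $L = A + I_L$ and $[I_L,L]=0$ one deduces $[L,A] = [A + I_L, A] = 0$, so the only obstruction to $A + \mathbb{F}v$ being a subalgebra is the square $[v,v] \in I_L$, which need not lie in $A$. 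If $[v,v] \in A$, then $A + \mathbb{F}v$ is a subalgebra, proper because $\mathrm{codim}\,A = k > 1$, contradicting maximality. If $[v,v] \notin A$, one must change generator and take $A + \mathbb{F}[v,v]$ instead: this is a subalgebra because $[[v,v],L] = 0$ and $[a,[v,v]] = [[a,v],v] + [v,[a,v]] \in [A,v] + [L,A] \subseteq A$, and it is again proper and strictly contains $A$, giving the contradiction. This dichotomy on $[v,v]$ --- in particular the trick of adjoining $[v,v]$ rather than $v$ itself --- is what your sketch lacks, and without it the case $I_L \not\subseteq A$ does not close.
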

\begin{proof}
     By the Leibniz rule, for $x\in L$ and $a, b\in A$, we have $[a,[b,x]]=[[a,b],x]+[b, [a,x]] = [b, [a,x]]$, so the maps $L_a$ and $L_b$ commute, that is, $L_aL_b = L_b L_a$. Consider the Fitting decomposition of $L$, with respect to the maps $M(A)$.
    Write $L=L_0 \oplus L_1$. Clearly, $A\subseteq L_0$. Since $M(A)$ acts nilpotently in $L_0$, there is some $x\in L_0$ such that $x\not\in A$ and $[A, x] \subseteq A$. 
    Suppose $I_L\subset A$, then $[a, x] + [x, a] \in A$ and $[a, x], [x, x]\in A$, for $a\in A$. Hence, the space $A+\mathbb{F}x$ is a subalgebra containing $A$, which is a contradiction. So assume $I_L\not\subset A$, then $L=A+ I_L$ and $[L, A]=0$. Now if $[x, x] \in A$, then $A+\mathbb{F}x$ is a subalgebra containing $A$, and if $[x, x] \not\in A$, then $A+\mathbb{F}[x, x]$ is a subalgebra containing $A$. In both cases, we obtain a contradiction. Therefore, $L_0 = A$ and it follows that $L_1$ is $M(A)$-invariant and $[A, L_1]=L_1$.
\end{proof}

{
\begin{prop}
    Let $L$ be a Leibniz algebra of dimension $n$ over an arbitrary field. Let $A$ be an abelian subalgebra of maximal dimension $n-m$ which is a maximal subalgebra. Then $\textrm{dim}(\textrm{Ann$_{\ell}$}(L))\geq n-m - (\lfloor m^2/4 \rfloor + 1)$.
\end{prop}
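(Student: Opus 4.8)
The plan is to translate the statement into an upper bound on the dimension of a commuting family of linear operators, and then invoke Schur's theorem on commutative matrix algebras. First I would dispose of the trivial situation: if $L$ is abelian then $\textrm{Ann$_{\ell}$}(L)=L$ and there is nothing to prove, so I assume $L$ non-abelian. I would also take $m\geq 2$, so that Lemma~\ref{lem1} applies directly (the case $m=1$ follows from the same Fitting-decomposition argument). Under these hypotheses Lemma~\ref{lem1} provides a decomposition $L=A\oplus L_1$ with $\dim L_1=m$, where $L_1$ is $M(A)$-invariant and $[A,L_1]=L_1$.

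Next I would introduce the restriction map
\[
\phi\colon A\longrightarrow \mathfrak{gl}(L_1),\qquad \phi(a)=L_a|_{L_1},
\]
which is well defined and linear precisely because $L_1$ is $M(A)$-invariant. The first key observation is that $\ker\phi\subseteq \textrm{Ann$_{\ell}$}(L)$: if $a\in\ker\phi$ then $[a,L_1]=0$, while $[a,A]=0$ since $A$ is abelian, and the splitting $L=A\oplus L_1$ then forces $[a,L]=0$, that is, $a\in\textrm{Ann$_{\ell}$}(L)$. Hence it suffices to bound $\dim(\ker\phi)$ from below, equivalently $\dim(\operatorname{im}\phi)$ from above.

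The substantive step, where the real work lies, is the bound $\dim(\operatorname{im}\phi)\leq \lfloor m^2/4\rfloor+1$. Here I would use that the operators $L_a$ commute pairwise, as already recorded in the proof of Lemma~\ref{lem1}, so that their restrictions $\phi(a)=L_a|_{L_1}$ also commute pairwise. Consequently the associative subalgebra $\mathcal{B}\subseteq\mathfrak{gl}(L_1)\cong M_m(\mathbb{F})$ generated by $\operatorname{im}\phi$ is commutative, and Schur's classical bound on the dimension of a commutative subalgebra of $M_m(\mathbb{F})$, available in \cite{ST68}, yields $\dim\mathcal{B}\leq \lfloor m^2/4\rfloor+1$. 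Since $\operatorname{im}\phi\subseteq\mathcal{B}$, the same bound holds for $\operatorname{im}\phi$.

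Finally I would conclude by the rank–nullity theorem. From $\dim A=n-m$ together with the bound above,
\[
\dim(\textrm{Ann$_{\ell}$}(L))\ \geq\ \dim(\ker\phi)\ =\ (n-m)-\dim(\operatorname{im}\phi)\ \geq\ (n-m)-\Big(\Big\lfloor\tfrac{m^2}{4}\Big\rfloor+1\Big),
\]
which is exactly the asserted inequality. The only delicate point I anticipate is the passage from the commuting \emph{family} $\operatorname{im}\phi$ to the commutative \emph{subalgebra} $\mathcal{B}$ it generates, since Schur's theorem is stated for subalgebras; this is legitimate because pairwise-commuting operators generate a commutative algebra whose dimension bounds that of their linear span. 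Everything else is routine.
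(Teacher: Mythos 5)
Your argument for $m \geq 2$ is correct and is essentially the paper's own proof: the paper likewise invokes Lemma~\ref{lem1} to write $L = A \oplus L_1$, defines the restriction map $\theta(a) = L_a|_{L_1}$, identifies its kernel $\textrm{Ann}_A^{\ell}(L_1) = \textrm{Ann}_A^{\ell}(L) \subseteq \textrm{Ann}_{\ell}(L)$, and bounds the image, an abelian subalgebra of $\mathfrak{gl}(L_1) \cong \mathfrak{gl}_m(\mathbb{F})$, by $\lfloor m^2/4 \rfloor + 1$ via the Schur bound over arbitrary fields from \cite{ST68}. Your remark about passing from the commuting family to the commutative associative algebra it generates is precisely the justification the paper leaves implicit.

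The genuine problem is your parenthetical claim that the case $m=1$ ``follows from the same Fitting-decomposition argument.'' It does not. Lemma~\ref{lem1} is stated and proved only for codimension $k>1$: its proof derives a contradiction from producing a subalgebra $A + \mathbb{F}x$ properly containing $A$, and when $A$ has codimension one there is no contradiction, since then $A + \mathbb{F}x = L$. Concretely, if $M(A)$ acts nilpotently on all of $L$, the Fitting component $L_1$ is zero and no decomposition with $[A,L_1]=L_1$ exists. Worse, the inequality itself is false for $m=1$: let $L$ be the $4$-dimensional Lie algebra with basis $e_1, e_2, e_3, x$ and nonzero brackets $[x,e_3]=-[e_3,x]=e_2$, $[x,e_2]=-[e_2,x]=e_1$ (that is, $\mathbb{F}^3$ extended by $x$ acting via a nilpotent Jordan block). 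Then $A=\textrm{span}(e_1,e_2,e_3)$ is an abelian subalgebra of maximal dimension $n-1=3$ and is a maximal subalgebra, yet $\textrm{Ann}_{\ell}(L)=\mathbb{F}e_1$ has dimension $1 < n-1-(\lfloor 1/4\rfloor + 1)=2$. So the $m=1$ case cannot be absorbed into the argument; the proposition must be read with $m\geq 2$ (a restriction the paper also leaves tacit, since its proof rests on Lemma~\ref{lem1} as well). With that parenthetical removed, your proof stands.
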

\begin{proof}
    Since $A$ is an abelian subalgebra of maximal dimension, we have $C(L)\subset A$. By Lemma~\ref{lem1}, there is a subspace $L_1\subsetneq L$ of dimension $m$ such that $L=A\oplus L_1$ and $[A, L_1]= L_1$.   
     Put $B=L_1$ and define $\theta : A \rightarrow \mathfrak{gl}(B) $ such that $\theta(a) = L_a|_B$. Then $\theta$ is a homomorphism from $A$ to $\mathfrak{gl}(B)$ with kernel $\textrm{Ann$_A^{\ell}$}(B)$. Hence $A/\textrm{Ann$_A^{\ell}$}(B) \cong D$, where $D$ is an abelian subalgebra of $\mathfrak{gl}(B)$. Hence $\dim (A/\textrm{Ann$_A^{\ell}$}(B))\leq \lfloor\frac{m^2}{4}\rfloor+1$. It follows that $\dim (\textrm{Ann$_A^{\ell}$}(B)) \geq n-m-(\lfloor\frac{m^2}{4}\rfloor+1)$. But $\textrm{Ann$_A^{\ell}$}(B)=\textrm{Ann$_A^{\ell}$}(L)\subseteq \textrm{Ann$_{\ell}$}(L)$, whence the result. 
\end{proof}}

\subsection{Abelian subalgebras of codimension two which are maximal subalgebras}

{\label{sec22}}

Consider the algebra $\mathfrak{a}(\lambda, \mu)$, where the parameters $\lambda = (\lambda_{ij}), \mu = (\mu_{ij}) \in \mathfrak{gl}_2(\mathbb{F})$, with basis $a, b , x, y$ given by the non-zero products
$$[a, x] = \lambda_{11} x + \lambda_{12} y, \quad     [a, y] = \lambda_{21} x + \lambda_{22} y, \quad [b, x] = \mu_{11} x + \mu_{12} y, \quad     [b, y] = \mu_{21} x + \mu_{22} y.$$ 
Note that the brackets are not assumed skew-symmetric. Clearly, this algebra is $2$-step solvable.

\begin{lemma}\label{alga}
    The algebra $\mathfrak{a}(\lambda, \mu)$ is a Leibniz algebra if and only if $\lambda\mu = \mu\lambda$. 
\end{lemma}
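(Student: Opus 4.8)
The plan is to verify the Leibniz identity $[u,[v,w]] = [[u,v],w] + [v,[u,w]]$ directly on triples of basis elements and extract the precise constraint on $\lambda$ and $\mu$. The crucial preliminary observation is that the left multiplication operators behave very asymmetrically on this algebra: since the only nonzero products have the form $[a,\cdot]$ or $[b,\cdot]$ with second argument in $\mathrm{span}(x,y)$, we have $L_x = L_y = 0$, while $L_a$ and $L_b$ annihilate $a$ and $b$ and carry $\mathrm{span}(x,y)$ into itself. Recording $L_a$ and $L_b$ in the basis $(x,y)$ gives the matrices $\lambda^{T}$ and $\mu^{T}$ respectively.

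First I would cut down the number of triples needing attention. Because $[v,w]\neq 0$ forces $v\in\{a,b\}$ and $w\in\{x,y\}$, and then $[u,[v,w]]\neq 0$ forces $u\in\{a,b\}$, the left-hand side $[u,[v,w]]$ can be nonzero only when $u,v\in\{a,b\}$ and $w\in\{x,y\}$. Next I would note that the first term on the right vanishes identically: whenever $[u,v]$ is nonzero it lies in $\mathrm{span}(x,y)$, and left multiplication by any element of $\mathrm{span}(x,y)$ is zero, so $[[u,v],w]=0$ for every triple. An identical argument shows $[v,[u,w]]$ is nonzero only for $u,v\in\{a,b\}$, $w\in\{x,y\}$. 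Hence the Leibniz identity collapses to the single requirement $L_u L_v(w) = L_v L_u(w)$ for $u,v\in\{a,b\}$ and $w\in\{x,y\}$, i.e.\ to the commutativity of $L_a$ and $L_b$ on $\mathrm{span}(x,y)$.

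The remaining step is to translate this commutativity into the stated matrix condition. The cases $u=v$ are automatic, so the only content is the pair $(u,v)=(a,b)$ (the pair $(b,a)$ yielding the same relation). Restricted to $\mathrm{span}(x,y)$ this reads $\lambda^{T}\mu^{T} = \mu^{T}\lambda^{T}$, that is $(\mu\lambda)^{T} = (\lambda\mu)^{T}$, which is equivalent to $\lambda\mu = \mu\lambda$. Since every step in the reduction is an equivalence, this establishes both implications simultaneously.

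I do not expect a genuine obstacle, as the claim is a finite verification; the one point demanding care is bookkeeping—keeping the order of multiplication and the transpose straight when passing from the structure constants $\lambda,\mu$ to the matrices of $L_a,L_b$ on $\mathrm{span}(x,y)$, so that commutativity of the operators delivers $\lambda\mu=\mu\lambda$ rather than its transpose.
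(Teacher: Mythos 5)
Your proof is correct and follows essentially the same route as the paper: the Leibniz identity on this algebra reduces to $[a,[b,z]]=[b,[a,z]]$ (the term $[[a,b],z]$ vanishes), i.e.\ to commutativity of $L_a$ and $L_b$, which is equivalent to $\lambda\mu=\mu\lambda$. The paper states the converse as a ``straightforward verification''; you have simply carried out that verification explicitly, including the (correct) bookkeeping with transposes.
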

 \begin{proof}
         Since $[a, [b, z]] = [b, [a, z]]$ for $z\in \mathfrak{a}(\lambda, \mu)$, we have $\lambda\mu = \mu\lambda$. The converse follows by an straightforward verification of the Leibniz identity.
 \end{proof}

 \begin{lemma}\label{alga2}
    If $\textrm{span}(\lambda, \mu) = \textrm{span}(\lambda', \mu')$, then $\mathfrak{a}(\lambda, \mu) \cong \mathfrak{a}(\lambda', \mu')$. 
\end{lemma}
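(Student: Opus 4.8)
The plan is to exhibit an explicit isomorphism that fixes the abelian ideal $V := \textrm{span}(x,y)$ and only rearranges the two-dimensional subalgebra $A := \textrm{span}(a,b)$. The key observation is that $\mathfrak{a}(\lambda,\mu)$ depends on $\lambda,\mu$ only through the action of $A$ on $V$. Writing $A' := \textrm{span}(a',b')$ for the corresponding subalgebra of $\mathfrak{a}(\lambda',\mu')$ and identifying the two copies of $V$, I would introduce the linear maps $\theta: A\to\mathfrak{gl}(V)$, $\theta(u)=L_u|_V$, and $\theta': A'\to\mathfrak{gl}(V)$ defined analogously. The assignment sending a parameter matrix to its left-multiplication operator is a fixed linear isomorphism $\mathfrak{gl}_2(\mathbb{F})\to\mathfrak{gl}(V)$ (it is transposition in the basis $x,y$), so the hypothesis $\textrm{span}(\lambda,\mu)=\textrm{span}(\lambda',\mu')$ says precisely that the images coincide: $\theta(A)=\theta'(A')=:W$. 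The whole problem then reduces to producing a linear isomorphism $\psi:A\to A'$ with $\theta'\circ\psi=\theta$, since I will then set $\phi:=\psi\oplus\mathrm{id}_V$.

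Next I would construct such a $\psi$ uniformly, regardless of whether $\dim W$ equals $2$, $1$, or $0$. Both $\theta$ and $\theta'$ are surjective onto $W$, hence $\ker\theta$ and $\ker\theta'$ have the same dimension $2-\dim W$. Choosing complements $C\subseteq A$ of $\ker\theta$ and $C'\subseteq A'$ of $\ker\theta'$, the restrictions $\theta|_C:C\to W$ and $\theta'|_{C'}:C'\to W$ are isomorphisms; I define $\psi$ on $C$ as $(\theta'|_{C'})^{-1}\circ\theta|_C$ and on $\ker\theta$ as any isomorphism onto $\ker\theta'$. Then $\psi$ is a linear isomorphism $A\to A'$ satisfying $\theta'\psi=\theta$ by a one-line check on the two summands. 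When $\dim W=2$ this specializes to the familiar statement that there is an invertible change-of-basis matrix $P=(p_{ij})$ with $\lambda=p_{11}\lambda'+p_{12}\mu'$ and $\mu=p_{21}\lambda'+p_{22}\mu'$, for which one simply sets $\phi(a)=p_{11}a'+p_{12}b'$ and $\phi(b)=p_{21}a'+p_{22}b'$.

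Finally I would verify that $\phi=\psi\oplus\mathrm{id}_V$ is an algebra homomorphism. Since in both algebras the only nonzero products among basis vectors are the four left-multiplications $[a,x],[a,y],[b,x],[b,y]$ (all other brackets, including those in $[V,A]$, $[A,A]$ and $[V,V]$, vanish), the verification reduces to these four products; for instance $[\phi(a),\phi(x)]=[\psi(a),x]=\theta'(\psi(a))(x)=\theta(a)(x)=[a,x]=\phi([a,x])$, where the middle equality is exactly the intertwining condition $\theta'\psi=\theta$. The remaining brackets send $0$ to $0$ because $A'$ is abelian and $V$ is a left ideal annihilated on the right. I expect the only genuinely delicate point to be the bijectivity of $\phi$ in the degenerate cases $\dim W\le 1$, where a naive basis-change matrix $P$ need not be invertible; this is precisely what the kernel-dimension argument of the second step handles, so the construction goes through uniformly.
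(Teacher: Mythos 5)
Your proof is correct, and the isomorphism it produces has the same shape as the paper's --- the identity on $V=\textrm{span}(x,y)$ together with a linear isomorphism of $\textrm{span}(a,b)$ --- but you reach it by a more uniform route. The paper splits into three cases according to $\dim(\textrm{span}(\lambda,\mu))$: for dimension $0$ both algebras are abelian, for dimension $1$ it normalizes $\mu=\mu'=0$ and rescales $a$, and for dimension $2$ it writes down the explicit change of basis given by the inverse of the coefficient matrix $(\alpha_{ij})$. You instead encode the structure constants as linear maps $\theta\colon A\to\mathfrak{gl}(V)$ and $\theta'\colon A'\to\mathfrak{gl}(V)$, observe that the hypothesis says exactly $\theta(A)=\theta'(A')$ (via the fixed isomorphism $\lambda\mapsto\lambda^{T}$ of $\mathfrak{gl}_2(\mathbb{F})$ with $\mathfrak{gl}(V)$), and manufacture an intertwiner $\psi$ with $\theta'\psi=\theta$ by splitting each of $A$, $A'$ into the kernel of its map and a complement; this subsumes all three cases at once. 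The trade-off: the paper's computation yields explicit formulas for the isomorphism, which is convenient when the normal form $\mathfrak{a}(\mathrm{id},m)$ is invoked later, while your argument is case-free and makes fully explicit the step the paper treats most tersely --- the reduction ``we can assume $\mu=\mu'=0$'' in the one-dimensional case is itself a preliminary base change of exactly the kind your kernel/complement decomposition constructs. You also spell out the verification that $\phi=\psi\oplus\mathrm{id}_V$ is multiplicative and bijective, which the paper leaves implicit. Both arguments are complete.
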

\begin{proof}
    Write $\lambda' =  \alpha_{11} \lambda + \alpha_{12} \mu$ and $\mu' = \alpha_{21} \lambda + \alpha_{22} \mu$. If $\textrm{dim}(\textrm{span}(\lambda, \mu)) = 0$, then both algebras are abelian. Now, if $\textrm{dim}(\textrm{span}(\lambda, \mu)) = 1$, then both algebras have a one-dimensional center and we can assume $\mu = \mu' = 0$. Choose the map sending $a$ to $\alpha_{11}^{-1} a$ and fixing $x$, $y$ and $b$.
    Finally, if $\textrm{dim}(\textrm{span}(\lambda, \mu)) = 2$, then $d := \textrm{det}(\alpha_{ij})\neq0$ and
    the map fixing $x$, $y$ and sending $a$ to $\alpha_{22} d^{-1} a - \alpha_{12} d^{-1}b$ and $b$ to $-\alpha_{21} d^{-1}a + \alpha_{11}d^{-1}b$
    is the desired isomorphism.
\end{proof}

Given $m \in \mathfrak{gl}_n(\mathbb{F})$, we denote by $\chi_m(t)$ the characteristic polynomial of $m$.

\begin{theorem}\label{thmmax}
    Let $L$ be a Leibniz non-Lie algebra of dimension $n$ over {an arbitrary field $\mathbb{F}$} of characteristic $p\neq 2$. Suppose $\alpha(L) = n-2$. If
    $A$ is an abelian subalgebra of codimension two which is a maximal subalgebra of $L$. Then $L$ is a $2$-step solvable algebra. Precisely, we have $L\cong \mathfrak{a}(id, m)\oplus \mathbb{F}^{n-4}$ where $m\in \mathfrak{gl}_2(\mathbb{F})$ with $\chi_m(t)$ irreducible. Also, $\beta(L)=n-2$, $C(L) = \mathbb{F}^{n-4}$ and $C(L) + L^2$ is an abelian ideal of maximal dimension.
\end{theorem}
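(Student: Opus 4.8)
The plan is to first put $L$ into the normal form $\mathfrak{a}(\lambda,\mu)\oplus\mathbb{F}^{n-4}$ of Section~\ref{sec22}, and then to identify the pair $(\lambda,\mu)$ using the hypotheses that $\alpha(L)=n-2$ and that $A$ is maximal. I would begin by invoking Lemma~\ref{lem1}: since $A$ is abelian of codimension two and maximal, there is a decomposition $L=A\oplus L_1$ with $\dim L_1=2$, $[A,L_1]=L_1$, and the operators $L_a$ ($a\in A$) commuting. Restricting to $L_1$ gives a homomorphism $\theta:A\to\mathfrak{gl}(L_1)=\mathfrak{gl}_2(\mathbb{F})$, $\theta(a)=L_a|_{L_1}$, with image an abelian Lie subalgebra $D$ satisfying $[D,L_1]=L_1$ and, by the preceding proposition, $\dim D\le 2$. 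I would also record the standing facts that $C(L)\subseteq A$ (maximality of $A$ among abelian subalgebras), that $I_L\subseteq\mathrm{Ann}_{\ell}(L)$, and that $\mathrm{Ann}_{\ell}(L)$ is a two-sided ideal which, like $I_L$, is $M(A)$-invariant. The whole point of the first half is to prove that $L_1\subseteq\mathrm{Ann}_{\ell}(L)$, since this is exactly what makes the multiplication that of $\mathfrak{a}(\lambda,\mu)$ with $\mathrm{span}(\lambda,\mu)=D$.

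The crux is therefore to show $[L_1,L]=0$. First, feeding the Leibniz identity $[a,[v,b]]=[[a,v],b]$ (for $a,b\in A$, $v\in L_1$, where $[a,b]=0$) into the decomposition, and choosing $a_0\in A$ with $L_{a_0}|_{L_1}$ invertible (possible because the Fitting-one component of $M(A)$ is all of $L_1$), one gets that $[L_1,A]\subseteq L_1$ and that every right multiplication $R_b|_{L_1}$ commutes with $D$. I would then study $W:=\mathrm{Ann}_{\ell}(L)\cap L_1$, which is a $D$-submodule of the two-dimensional space $L_1$, and argue by its dimension. If $\dim W=1$, say $W=\mathbb{F}w_0$, then $A\oplus\mathbb{F}w_0$ is a subalgebra (using $[w_0,L]=0$ and that $\mathbb{F}w_0$ is $D$-invariant) lying strictly between $A$ and $L$, contradicting the maximality of $A$; if $\dim W=2$ we are done. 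The delicate case, which I expect to be the main obstacle, is $\dim W=0$: here, because $\mathrm{char}\,\mathbb{F}\neq 2$, the symmetric parts $[a,v]+[v,a]$ lie in $\mathrm{Ann}_{\ell}(L)\cap L_1=0$, so the $A$-action on $L_1$ is skew, which forces $I_L$ to be a nonzero central ideal contained in $A$. One must then rule this out—most naturally by passing to the Lie quotient $L/I_L$, in which $\bar A$ is again a maximal abelian subalgebra of codimension two, and deriving a contradiction with $\alpha(L)=n-2$ through the known Lie-algebra results. Granting $[L_1,L]=0$, Lemmas~\ref{alga} and~\ref{alga2} give $L\cong\mathfrak{a}(\lambda,\mu)\oplus\mathbb{F}^{n-4}$.

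With $[L_1,L]=0$ in hand, the dimension of $D$ is pinned down by a counting argument. If $\dim D\le 1$, then $\ker\theta=\mathrm{Ann}_A^{\ell}(L_1)$ has dimension at least $n-3$, and $\ker\theta+L_1$ is abelian of dimension at least $n-1$—all the relevant products vanish—contradicting $\alpha(L)=n-2$. Hence $\dim D=2$ and $\ker\theta$ has dimension $n-4$. Since $\ker\theta$ annihilates $L_1$ on the left, commutes with $A$, and is annihilated on the right by $L_1$ (as $L_1\subseteq\mathrm{Ann}_{\ell}(L)$), it is central; together with $C(L)\subseteq A$ acting trivially on $L_1$ this yields $C(L)=\ker\theta\cong\mathbb{F}^{n-4}$.

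Finally I would exploit the maximality of $A$ once more: any $D$-invariant line $\mathbb{F}v\subseteq L_1$ would give, using $[v,L]=0$, a subalgebra $A\oplus\mathbb{F}v$ strictly between $A$ and $L$; so $L_1$ is an irreducible $D$-module. A two-dimensional commutative subalgebra of $\mathfrak{gl}_2(\mathbb{F})$ acting irreducibly generates, and hence equals, a quadratic field extension, so $D=\mathbb{F}[m]\ni\mathrm{id}$ with $\chi_m(t)$ irreducible. Choosing the basis $\{\mathrm{id},m\}$ of $D$, Lemma~\ref{alga2} gives $L\cong\mathfrak{a}(\mathrm{id},m)\oplus\mathbb{F}^{n-4}$. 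Since $L^2=[A,L_1]=L_1$ is abelian, $L$ is $2$-step solvable; and $C(L)+L^2=\mathbb{F}^{n-4}\oplus L_1$ is an abelian ideal of dimension $n-2$, so that $\beta(L)=n-2=\alpha(L)$ and $C(L)+L^2$ is an abelian ideal of maximal dimension, completing the proof.
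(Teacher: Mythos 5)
Your reduction is structurally sound and, where it is complete, essentially parallels the paper: Lemma~\ref{lem1} gives $L=A\oplus L_1$ with $[A,L_1]=L_1$, and your cases $\dim W=1,2$ for $W:=\mathrm{Ann}_{\ell}(L)\cap L_1$ are handled correctly (the paper instead splits on whether $I_L\subseteq A$, but in the case $I_L\not\subseteq A$ it reaches the same conclusion $[L_1,L]=0$ by noting $L=A+I_L$ and $[L,A]=0$). The second half of your argument --- the counting that forces $\dim D=2$, the maximality argument forcing irreducibility of $L_1$ as a $D$-module, the identification $D=\mathbb{F}[m]$ with $\chi_m(t)$ irreducible, and the verification of $C(L)=\mathbb{F}^{n-4}$, $L^2=L_1$, $\beta(L)=n-2$ --- is also correct. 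The genuine gap is the case $\dim W=0$, which you explicitly flag as the main obstacle and then do not prove. That case is precisely the bulk of the paper's proof (its case $I_L\subseteq A$): there the paper shows by explicit Leibniz-identity computations --- subdivided according to whether $[x,y]\in A$ and according to $\dim M(A)\in\{0,1,2\}$, using an element $b$ with $L_b|_{L_1}=\mathrm{id}$ when $\dim M(A)=2$ and an irreducible $L_h|_{L_1}$ when $\dim M(A)=1$ --- that anticommutativity propagates to all of $L$, so $L$ is a Lie algebra (as $p\neq 2$), contradicting the non-Lie hypothesis. Note that your skewness conclusion $[v,a]=-[a,v]$ only controls the $A\times L_1$ products; the products $[x,x],[y,y],[x,y],[y,x]$ inside $L_1$ (in particular the possibility $[x,y]\notin A$, which the paper must exclude separately) are untouched by it.

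Your proposed repair --- pass to the Lie quotient $\bar L=L/I_L$ and ``derive a contradiction through the known Lie-algebra results'' --- is not a proof as stated, and has two concrete obstacles. First, Proposition~\ref{maxlie} (i.e.\ \cite[Theorem 4.3]{FNT24}) requires $\alpha(\bar L)=\dim\bar L-2$, not merely that $\bar A$ is a maximal abelian subalgebra of codimension two; an abelian subalgebra of $\bar L$ of codimension $\le 1$ lifts only to a subalgebra $S\subseteq L$ with $[S,S]\subseteq I_L$, which is $2$-step nilpotent rather than abelian, so no contradiction with $\alpha(L)=n-2$ is immediate and this hypothesis remains unverified. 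Second, even granting the classification of $\bar L$, your $L$ is then a Leibniz central extension of $\bar L$ by $I_L$, and non-Lie central extensions of Lie algebras exist in abundance; ruling out those compatible with your standing constraints (skew $A$-action, $[A,L_1]=L_1$, maximality of $A$) is exactly the computational content the paper supplies and your sketch omits. So the case $\dim W=0$ must be closed by an actual argument, e.g.\ the paper's direct one, before your proof is complete.
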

\begin{proof}
    Let $A$ be an abelian subalgebra of codimension two which is a maximal subalgebra of $L$. By Lemma \ref{lem1}, we have $L=A\oplus L_1$, where $L_1\subset L$ is a two-dimensional vector space such that $[A, L_1]=L_1$. Let $x, y$ be a basis of $L_1$. We distinguish two cases depending on the ideal $I_L$.

    \medskip 
    
    \noindent\underline{Case $I_L\not\subseteq A$.} Then we have $L=A+I_L$, by the maximality of $A$. It follows that $[L, L] = [A+I_L, A+I_L] = [A, I_L]\subset I_L$. Since $I_L$ is abelian, $L$ is $2$-step solvable Leibniz non-Lie algebra. Also, we have $[A, L_1] = L_1$, then $[L_1, L_1] = 0$. Hence, $C(L)+L_1$ is an abelian ideal of $L$, because $[L, A] = [A+I_L, A] = 0$ and $[L_1, A] = 0$.
    Also, we have that the dimension of $C(L)$ is $n-4$, because the dimension of the abelian subalgebra $C(L)+L_1$ is at most $n-2$ and the dimension of $M(A)$ is at most two, because $\alpha(\mathfrak{gl}_2(\mathbb{F}))=2$.
    
    Let $a, b$ be a basis of the linear complement of $C(L)$ in $A$. Then, the algebra $L$ is determined by the products
    $[a, x],  [a, y], [b, x], [b, y]\in L_1$. Therefore, we have $L=\mathfrak{a}(\lambda, \mu)+ \mathbb{F}^{n-4}$, for certain linear independent $\lambda, \mu \in \mathfrak{gl}_2(\mathbb{F})$. By Lemma \ref{alga}, the matrices $\lambda , \mu$ commute. By the maximality of $A$, the set $\left\{\lambda, \mu\right\}$ should be irreducible in $L_1$, but also since this set of generators is maximal, then the algebra spanned by it contains the identity matrix. Hence, there is $m\in \mathfrak{gl}_2(\mathbb{F})$ such that $\lambda, \mu \in \textrm{span}(id, m)$. Moreover, the polynomial $\chi_m(t)$ must be irreducible. By Lemma \ref{alga2}, the assertion in the theorem follows.

    \medskip 

    \noindent\underline{Case $I_L\subseteq A$.}  Assume $I_L\neq0$, otherwise $L$ is a Lie algebra, and $I_L\neq A$, otherwise $A$ is not a maximal subalgebra. 
    Suppose $[x, y] \not \in A$. Then $A' := A+\mathbb{F}[x, y]$ is a subalgebra containing $A$. Indeed, note that $[y, x] \in A'$ because $I_L\subseteq A$. It follows $[L_1, L_1]\subset A'$. Observe that for any $a\in A$, we have $[a, [x, y]] = [[a, x], y] + [x, [a, y]] \in [L_1, L_1]$. Also, we have   
    $[[x, y], a] = [x, [y, a]] - [y, [x, a]]$. But $[x, [y, a] + [a, y]], [y, [x, a] + [a, x]] \in I_L \subset A$ and $[x, [a, y]], [y, [a, x]] \in A'$. Hence, $[[x, y], a]\in A'$. Moreover, we have $[[x, y], [x, y]]\in I_L \subset A$. Consequently, the space $A'$ is a subalgebra, which is a contradiction. 
    
    Next, suppose $[x, y] \in A$. Then we have $[y, x]\in A$ and $[L_1, L_1]\subset A$.  Observe that if $v\in L_1$ with $v\neq 0$ is invariant in $M(A)$, then $A + \mathbb{F}v$ is a subalgebra, because
    $[v, v] \in A$ and 
    $[v, a] = [v, a] - [a, v] + [a, v] \in I_L + \mathbb{F}v \subset A + \mathbb{F}v$. 
    Also, $\textrm{dim}(M(A))\leq 2$, because the maps $M(A)$ commute. Now, distinguish the following cases.
    \begin{itemize}[-]

    \item Case $\textrm{dim}(M(A)) = 2$. Since $M(A)$ is an abelian subalgebra of $\mathfrak{gl}_2(\mathbb{F})$ of maximal dimension, then there is some $b\in A$ such that $L_b=id$. It follows, $[x, a] = [[b, x], a] = [b,[x,a]] = -[b, [a, x]] = -[a, x]$ for any $a\in A$.  Similarly, $[y, a] = - [a, y]$.  
    Moreover, we have the relations
    $$[x, x] = [[b, x], x] = -[x, [b, x]]=-[x,x], \quad [x, y] = [[b, x], y] = -[x, [b, y]]=-[x,y], $$
    $$[y, x] = [[b, y], x] = -[y, [b, x]]=-[y,x], \quad [y, y] = [[b, y], y] = -[y, [b, y]]=-[y,y]. $$
    If $p\neq 2$, then $L$ is a Lie algebra, a contradiction.

        \item Case $\textrm{dim}(M(A)) = 1$. Let $h\in A$ such that $L_h\neq0$ is irreducible in $L_1$. Clearly, we have $h\not\in I_L$ and $L_h(L_1) = L_1$. Now, for $z\in L_1$ and $a\in A$ we have
        $[[h, z], a] = [h,[z,a]] = -[h, [a, z]] = -[a, [h, z]]$. Since $x, y\in L_h(L_1)$, we obtain $[z, a] = - [a, z]$ for all $z\in L_1$ and $a\in A$. 
        Also, for $z, z'\in L_1$ we have the equation
        $[[h, z], z'] = -[z, [h, z']] = [z, [z', h]] = [z', [z, h]] = -[z', [h, z]]$. Again, we obtain $[z, z'] = - [z', z]$ for all $z, z'\in L_1$. Therefore, we conclude that $L$ is a Lie algebra, since $p\neq 2$.
        
        \item The case $\textrm{dim}(M(A))= 0$ leads to a contradiction as explained above, because $\mathbb{F}x$ is $M(A)$-invariant.

    \end{itemize}
   Hence, we have shown that the only possibility is the algebra in the statement, proving the result. 
\end{proof}

\begin{remark}
     Consider the algebra $\mathfrak{a}(id, m)$ over $\mathbb{R}$, where $m = (m_{ij})$, $m_{11} = m_{22}=0$ and $m_{12} = -m_{21} = 1$. Then the non-zero products are given by
     $[a, x] =  x,     [a, y] = y, [b, x] = y,      [b, y] = - x.$
     This is an example of a Leibniz non-Lie algebra with an abelian subalgebra of codimension two $A=\textrm{span}(a, b)$ which is a maximal subalgebra.
\end{remark}

Let us introduce the following algebras for $\lambda = (\lambda_{ij}), \mu = (\mu_{ij}) \in \mathfrak{gl}_2(\mathbb{F})$. The algebra $\mathfrak{b}(\lambda, \mu)$ with basis $a, b , x, y$ is given by the next non-zero products
$$[a, x] = -[x, a] = \lambda_{11} x + \lambda_{12} y,    \quad  [a, y] = -[y, a] = \lambda_{21} x + \lambda_{22} y,  $$
$$[b, x] = - [x, b] = \mu_{11} x + \mu_{12} y, \quad      [b, y] = -[y, b] = \mu_{21} x + \mu_{22} y.$$ 
Also, the algebra $\mathfrak{c}(\lambda)$ with basis $a, b , x, y$ is given by
$$[a, x] = -[x, a] = \lambda_{11} x + \lambda_{12} y, \quad     [a, y] = -[y, a] = \lambda_{21} x + \lambda_{22} y, \quad [x, y] = -[y, x] = b.$$ 

\begin{remark}
    The algebra $\mathfrak{b}(\lambda, \mu)$ 
    is $2$-step solvable. Moreover, it is a Lie algebra if and only if $\lambda \mu = \mu \lambda$. Likewise, the algebra $\mathfrak{c}(\lambda)$ is $3$-step solvable. Moreover, it is a Lie algebra if and only if $\lambda\in \mathfrak{sl}_2(\mathbb{F})$. 
\end{remark}

The next proposition complements the result for Lie algebras obtained in \cite[Theorem 4.3]{FNT24}  with further observations for the solvable case. Recall that the algebra $\mathfrak{q}_3(\lambda)$, 
 introduced in \cite{FNT24}, where $\lambda = (\lambda_{ij}) \in M_2(\mathbb{F})$, is the vector space with basis $h, x, y$ and {skew-symmetric} multiplication given by
$$[h, x] = \lambda_{11} x + \lambda_{12} y, \quad     [h, y] = \lambda_{21} x + \lambda_{22} y, \quad [x, y]=h.$$ 

\begin{prop}\label{maxlie}
    Let $L$ be a Lie algebra of dimension $n$ over a field $\mathbb{F}$ of characteristic $p\neq2$ with $\alpha(L) = n-2$. If
    $A$ is an abelian subalgebra of codimension two which is a maximal subalgebra. Then one of the following occurs:   
    \begin{enumerate}
        \item  $L$ is a $2$-step solvable algebra and $L \cong \mathfrak{b}(id, m)\oplus \mathbb{F}^{n-4}$, where $m\in \mathfrak{gl}_2(\mathbb{F})$ with $\chi_m(t)$ irreducible.  Moreover, $\beta(L)=n-2$, $C(L) \cong  \mathbb{F}^{n-4}$ and $C(L) + L^2$ is an abelian ideal of maximal dimension.

        \item $L$ is a $3$-step solvable algebra and $L \cong \mathfrak{c}(m)\oplus \mathbb{F}^{n-4}$, where $m\in \mathfrak{sl}_2(\mathbb{F})$ with $\chi_m(t)$ irreducible.  Moreover, $\beta(L)=n-3$, $L^2$ is the Heisenberg algebra and $C(L) \cong \mathbb{F}^{n-4} + L^{(3)}$
        is an abelian ideal of maximal dimension.

        \item $L$ is almost simple and $L \cong  \mathfrak{d}(m)\oplus \mathbb{F}^{n-3}$, where $m\in \mathfrak{sl}_2(\mathbb{F})$ with $\chi_m(t)$ irreducible and $\mathfrak{d}(m) := \mathfrak{q}_3(m)$. Moreover, $\beta(L)=n-3$ and $C(L) \cong  \mathbb{F}^{n-3}$ is an abelian ideal of maximal dimension.
        \end{enumerate}
\end{prop}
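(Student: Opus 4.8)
The plan is to follow the strategy of Theorem~\ref{thmmax}, but now exploiting that $L$ is a Lie algebra (so $I_L=0$ and all brackets are skew-symmetric) to extract three genuine possibilities rather than to derive a contradiction. By Lemma~\ref{lem1} I would write $L=A\oplus L_1$ with $L_1=\mathrm{span}(x,y)$ two-dimensional and $[A,L_1]=L_1$; since the operators $L_a|_{L_1}$ commute, their span $M$ is an abelian subalgebra of $\mathfrak{gl}(L_1)\cong\mathfrak{gl}_2(\mathbb{F})$, so $\dim M\le 2$, while $\dim M\ge 1$ because $[A,L_1]=L_1$. The first step is to record that maximality of $A$ forces $M$ to act irreducibly on $L_1$: if $0\neq v\in L_1$ were a common eigenvector, then $[A,v]\subseteq\mathbb{F}v$ and $[v,v]=0$, so $A+\mathbb{F}v$ would be a subalgebra strictly containing $A$. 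In particular, when $\dim M=1$ its generator $\lambda$ has irreducible characteristic polynomial, and when $\dim M=2$, being a two-dimensional irreducible abelian subalgebra of $\mathfrak{gl}_2(\mathbb{F})$, it equals a quadratic field $\mathbb{F}[m]=\mathrm{span}(\mathrm{id},m)$ with $\chi_m$ irreducible; in particular $\mathrm{id}\in M$.

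The second step, which I expect to be the technical crux, is to show $[L_1,L_1]=\mathbb{F}[x,y]\subseteq A$. Writing $[x,y]=a_0+v$ with $a_0\in A$, $v\in L_1$, I would apply an acting element together with the Jacobi identity. If $\dim M=2$, pick $b\in A$ with $L_b|_{L_1}=\mathrm{id}$; Jacobi gives $[b,[x,y]]=2[x,y]$ while direct computation gives $[b,a_0+v]=v$, so $v=2a_0+2v$, forcing $v=-2a_0\in A\cap L_1=0$ and hence $[x,y]=0$ (here $p\neq 2$ is used). If $\dim M=1$, with $L_h|_{L_1}=\lambda$, Jacobi gives $[h,[x,y]]=\mathrm{tr}(\lambda)[x,y]$ while $[h,a_0+v]=\lambda(v)$; comparing $A$- and $L_1$-components yields $\lambda(v)=\mathrm{tr}(\lambda)v$, which would make $v$ an eigenvector of the irreducible $\lambda$ unless $v=0$. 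Thus $[x,y]\in A$ in all cases.

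With $[x,y]\in A$ in hand, I would split into the three outcomes according to $\dim M$ and the action of $[x,y]$. If $\dim M=2$ then $[x,y]=0$, and choosing generators of a complement of $C(L)=\{a\in A:[a,L_1]=0\}$ in $A$ mapping to $\mathrm{id}$ and $m$ identifies $L\cong\mathfrak{b}(\mathrm{id},m)\oplus\mathbb{F}^{n-4}$ with $\chi_m$ irreducible, giving~(1). If $\dim M=1$ then $M=\mathbb{F}\lambda$ with $\lambda$ irreducible, and since $L_{[x,y]}|_{L_1}\in\mathbb{F}\lambda$ I would distinguish three sub-cases. When $[x,y]=0$, the subspace $C(L)+L_1$ is abelian of dimension $(n-3)+2=n-1>n-2$, contradicting $\alpha(L)=n-2$, so this cannot occur. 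When $0\neq[x,y]=:b$ acts trivially on $L_1$ (so $b\in C(L)$), the relation $\mathrm{tr}(\lambda)\,b=[h,[x,y]]=[h,b]=0$ forces $\lambda\in\mathfrak{sl}_2(\mathbb{F})$ and $L\cong\mathfrak{c}(m)\oplus\mathbb{F}^{n-4}$ with $m=\lambda$, giving~(2). When $[x,y]=:h$ acts nontrivially, $L_h|_{L_1}=c\lambda$ with $c\neq 0$, the span $\mathrm{span}(h,x,y)$ closes to a copy of $\mathfrak{q}_3(c\lambda)$, the computation $\mathrm{tr}(c\lambda)\,h=[h,h]=0$ gives $m:=c\lambda\in\mathfrak{sl}_2(\mathbb{F})$, and $L\cong\mathfrak{q}_3(m)\oplus\mathbb{F}^{n-3}$, giving~(3).

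Finally I would verify the structural invariants case by case: that $C(L)=\{a\in A:[a,L_1]=0\}$ has the stated dimension, that $C(L)+L^2$ (resp.\ $C(L)$) realizes $\beta(L)$, and the derived lengths — for~(2) noting that $L^2=\mathrm{span}(x,y,[x,y])$ is the Heisenberg algebra $\mathfrak{h}(\mathbb{F})$ (as $m$ is invertible, being irreducible), so $L$ is $3$-step solvable, and for~(3) that $\mathfrak{q}_3(m)$ is perfect and hence simple, so $L$ is almost simple. The main obstacle is the interplay in the case $\dim M=1$: proving $[x,y]\in A$ through irreducibility, and then correctly separating the impossible, solvable, and almost-simple branches by the action of $[x,y]$ on $L_1$, which is precisely what produces the qualitatively different algebras $\mathfrak{c}(m)$ and $\mathfrak{q}_3(m)$.
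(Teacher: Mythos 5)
Your proposal is correct, but it follows a genuinely more self-contained route than the paper. The paper's proof of Proposition~\ref{maxlie} uses the same starting point as you do (the decomposition $L=A\oplus L_1$ from Lemma~\ref{lem1} and the commuting family $M(A)|_{L_1}$), but then it simply \emph{imports} the trichotomy from \cite[Theorem 4.3]{FNT24}: the two solvable cases are identified with $[x,y]=0$ and $[x,y]\neq 0$ central, their normal forms $\mathfrak{b}(id,m)$ and $\mathfrak{c}(m)$ are worked out by the same span/irreducibility argument as in Theorem~\ref{thmmax}, and the almost simple case (3) is inherited from the cited theorem without further argument. You instead re-derive the trichotomy from scratch: maximality of $A$ forces $M$ to act irreducibly on $L_1$ (no common eigenvector), Schur-type reasoning pins down $M$ as either $\mathbb{F}\lambda$ with $\chi_\lambda$ irreducible or the quadratic field $\mathrm{span}(\mathrm{id},m)$, and the key identity $[h,[x,y]]=\mathrm{tr}(L_h|_{L_1})[x,y]$ (compared against the direct computation of $[h,a_0+v]$) both places $[x,y]$ in $A$ and produces the trace-zero condition that separates $\mathfrak{c}(m)$ from $\mathfrak{q}_3(m)$; you also explicitly rule out the fourth combination ($\dim M=1$, $[x,y]=0$) via the abelian subalgebra $C_A(L_1)+L_1$ of dimension $n-1$, a case the paper never has to mention because it is absorbed into the citation. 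What your approach buys is independence from \cite{FNT24} and a transparent explanation of where the three algebras come from; what the paper's approach buys is brevity. The only soft spot in your write-up is that the final verification of the invariants ($\beta(L)$, the center dimensions, and maximality/uniqueness of the stated abelian ideals) is deferred as routine; these checks do go through (e.g.\ in case (2) an abelian ideal of dimension $n-2$ may be assumed to contain $C(L)$, and irreducibility of $m$ then forces it to contain $L_1$, contradicting $[x,y]\neq0$), but they deserve a line or two, noting that the paper is equally terse on this point.
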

\begin{proof}
    Let $L = A + L_1$ be the decomposition of $L$ with respect to the maps in $M(A)$. Let $x, y$ be a basis of $L_1$.
    There are two solvable cases in \cite[Theorem 4.3]{FNT24}. Let us denote $ad_z := L_z$ for $z\in L$ in this proof.

        \medskip

    \noindent\underline{The case when $L$ is $2$-step solvable.}
    This case arise when $[x,  y] = 0$. Then $\textrm{dim}(ad_{A}) = 2$ and $C(L) + L_1$ is an abelian ideal of codimension two. Suppose $ad_A$ is spanned by $ad_a$ and $ad_b$, then $a, b$ are linearly independent. It follows that $L = \mathfrak{b}(\lambda, \mu) \oplus \mathbb{F}^{n-4}$ for certain commuting linearly independent $\lambda, \mu  \in \mathfrak{gl}_2(\mathbb{F})$. By a similar argument used in the previous theorem, we have that $L\cong \mathfrak{b}(id, m) \oplus \mathbb{F}^{n-4}$ where $m\in \mathfrak{gl}_2(\mathbb{F})$ and the polynomial $\chi_m(t)$ is irreducible. The statement (1) in the theorem is obtained.

        \medskip

    \noindent\underline{The case when $L$ is $3$-step solvable.} This case arise when $b: = [x, y]\neq0$, $b\in A$ and $ad_b = 0$. Then $ad_A$ can be realized as a abelian subalgebra of $\mathfrak{sl_2(\mathbb{F})}$, because $0=[a, [x, y]] = [[a,x], y] + [x, [a, y]]$, and $\textrm{dim}(ad_{A}) = 1$. Therefore, we have that $L = \mathfrak{c}(m) \oplus \mathbb{F}^{n-4}$ for some $m \in \mathfrak{sl}_2(\mathbb{F})$ where the polynomial $\chi_m(t)$ is irreducible.   
    The assertion (2) in the theorem follows.
\end{proof}

In conclusion, the characterization of the Leibniz algebras $L$ over a field of characteristic $p\neq 2$ with an abelian subalgebra of codimension two which is a maximal subalgebra and such that $\alpha(L)=n-2$ is given by Theorem \ref{thmmax} and Proposition \ref{maxlie}.

\section{Abelian subalgebras of codimension two which are not maximal}
\label{sec3}

In this section we study the case in which the Leibniz algebra contains an abelian subalgebra $A$ of codimension two and a subalgebra $B$ of codimension one such that $A\subset B$. The case in which $A$ contains the distinguished ideal $I_L$ is considered first in the following lemmas.

\begin{lemma}\label{lemia2}
    Let $L$ be a Leibniz algebra of dimension $n$ over an arbitrary field $\mathbb{F}$ {of characteristic $p\neq2$ such that $\alpha(L) = n-2$}. Let $A$ be an abelian subalgebra of codimension two such that $I_L\subseteq A$. Suppose $B = A + \mathbb{F}e_1$ is a subalgebra of $L$ and suppose $A$ is an ideal of $B$. Moreover, suppose $M(A)$ does not act nilpotently in $L$. Then $A$ is not an ideal of $L$, but $\beta(L) = n-2$.
\end{lemma}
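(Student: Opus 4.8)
The plan is to treat the two assertions separately, the first being short and the second requiring an explicit construction of an abelian ideal.

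For the claim that $A$ is not an ideal of $L$, I would argue by contradiction, and this part needs neither $B$ nor the hypothesis $I_L\subseteq A$. If $A$ were an ideal, then for every $a\in A$ we would have $[a,L]\subseteq A$, while $[a,A]=0$ since $A$ is abelian; hence $L_a^2=0$. As every element of $M(A)$ is of the form $L_a$ and these maps commute (this is exactly the commuting established in the proof of Lemma~\ref{lem1}), $M(A)$ would consist of commuting nilpotent operators and so would act nilpotently on $L$ (equivalently, its Fitting-one component would be zero), contradicting the standing hypothesis that $M(A)$ does not act nilpotently.

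For $\beta(L)=n-2$, I first note $\beta(L)\le\alpha(L)=n-2$, so it suffices to produce an abelian ideal of dimension $n-2$. Since $M(A)$ is non-nilpotent I fix $a_0\in A$ with $L_{a_0}$ not nilpotent. Because $A$ is an ideal of $B=A+\mathbb{F}e_1$ we have $[a_0,e_1]\in A$, whence $L_{a_0}^2(e_1)=0$, so $B$ is contained in the Fitting-null component of $L_{a_0}$. As $B$ has codimension one and the Fitting-one component is nonzero, that component is exactly one-dimensional and equals a line complementary to $B$; writing $L=B\oplus\mathbb{F}e_2$ with $e_2$ spanning it and rescaling $a_0$, I may assume $[a_0,e_2]=e_2$. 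Because the $L_a$ commute with $L_{a_0}$ they preserve this Fitting-one line, so $[a,e_2]=\tau(a)e_2$ for a linear functional $\tau\in A^*$ with $\tau(a_0)=1$; I set $A_0:=\ker\tau$, of dimension $n-3$.

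The next step is to normalise the remaining products using $I_L\subseteq A$ (so that $[u,v]+[v,u]\in A$ for all $u,v$) together with the Leibniz identity $[a_0,[u,v]]=[[a_0,u],v]+[u,[a_0,v]]$. From these I would derive $[e_2,a]=-\tau(a)e_2$, then $[e_2,e_2]=0$ (here $p\neq2$ enters, via $2[e_2,e_2]=0$), and then $[e_1,e_2]=\varepsilon e_2$, $[e_2,e_1]=-\varepsilon e_2$ for some $\varepsilon\in\mathbb{F}$, the same computation also yielding $\tau([a_0,e_1])=0$. I then propose $J:=A_0+\mathbb{F}e_2$, of dimension $n-2$; it is abelian because $\tau(A_0)=0$ annihilates every bracket involving $e_2$ and $A$ is abelian.

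The main obstacle is verifying that $J$ is an ideal, that is $[L,J]\subseteq J$ and $[J,L]\subseteq J$; all brackets are immediate from the table above except the inclusions $[e_1,A_0]\subseteq A_0$ and $[A_0,e_1]\subseteq A_0$. For these I show $\tau([e_1,a])=0=\tau([a,e_1])$ for $a\in A_0$: feeding $[a,e_2]=0$ into $[e_1,[a,e_2]]=[[e_1,a],e_2]+[a,[e_1,e_2]]$ and into the companion identity $[a,[e_1,e_2]]=[[a,e_1],e_2]+[e_1,[a,e_2]]$, and using $[e_1,e_2]\in\mathbb{F}e_2$ to kill the last term, gives $[[e_1,a],e_2]=0=[[a,e_1],e_2]$; since $[e_1,a],[a,e_1]\in A$ (as $A$ is an ideal of $B$), the vanishing of the $e_2$-coefficient means precisely $\tau([e_1,a])=\tau([a,e_1])=0$, so both brackets lie in $A_0$. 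Granting that every product lands in $J$, we conclude that $J$ is an abelian ideal of dimension $n-2$, and therefore $\beta(L)=n-2$.
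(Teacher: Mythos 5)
Your proposal is correct and follows essentially the same route as the paper: both use the Fitting decomposition (you relative to a single non-nilpotent $L_{a_0}$, the paper relative to all of $M(A)$) to produce a one-dimensional $M(A)$-invariant complement of $B$, and both then show that the kernel of the eigenvalue functional on $A$ together with that line --- the paper's $Z+\mathbb{F}x$ is exactly your $A_0+\mathbb{F}e_2$ --- is an abelian ideal of dimension $n-2$. The differences are only organizational: by pinning down the exact formulas $[e_2,a]=-\tau(a)e_2$, $[e_1,e_2]=\varepsilon e_2=-[e_2,e_1]$ via the Leibniz identity with $a_0$, you bypass the paper's separate case analysis on whether $I_L\subseteq Z$, and your standalone argument that $A$ cannot be an ideal (commuting square-zero operators act nilpotently) replaces the paper's observation that $[e_2,x]=\lambda_2 x\notin A$.
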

\begin{proof}
    Note that since $A$ is an abelian ideal in $B$, we have that $M(A)$ acts nilpotently in $B$ which has codimension one. Therefore, there is some $x\in L$ with $x\not\in B$ such that $[A, x]\subset \mathbb{F}x$. Write $[e_i, x] = \lambda_i x$ for a basis $e_2, \ldots, e_{n-1}$ of $A$ and $\lambda_i\in \mathbb{F}$. 
    Assume $\lambda_2\neq0$. Then $A$ is not an ideal of $L$. Denote $Z= \textrm{span}(v_i: 3\leq i\leq n-1)$ where $v_i = e_i-\lambda_2^{-1}\lambda_{j} e_2$.     
    Note that $[v_i, x] =0$. Also, we have  $[e_1, x] = \lambda_2^{-2}[e_1, [e_2, [e_2, x]]] = \mu x$ for some $\mu \in \mathbb{F}$, using the Leibniz rule.    If $I_L\not\subset Z$, then $Z+I_L = A$. But we have $[Z, x] = 0$ and $[x, Z]\subset I_L \subset A$, implying that $A$ is an ideal, which is a contradiction.    
    So assume $I_L\subset Z$.      
    Then, $A'=Z+\mathbb{F}x$ is an abelian subalgebra of codimension two. Certainly, we have $0 = [e_2, [x, x]] = [[e_2, x], x] + [x, [e_2, x]] = 2\lambda_2[x, x]$, so $[x, x] = 0$, and 
    $$[x, v_i] = \lambda_2^{-1}[[e_2, x], v_i] = \lambda_2^{-1}[e_2, [x, v_i]] =- \lambda_2^{-1}[e_2, [v_i, x]]= - [v_i, x]=0.$$
    Moreover $A'$ is an ideal, since $[e_1, x] \in \mathbb{F}x$ and $[x, e_1]\in \mathbb{F}x + I_L \subset A'$. Also, since $[[e_1, e_j], x] = 0$ and $Z$ has codimension one in $A$, we have $[e_1, e_j] \in Z$, implying $[e_1, v_j], [v_j, e_1]\in Z$. The result follows.
\end{proof}

\begin{remark}
     An example of a Leibniz non-Lie algebra of the type of the Lemma~\ref{lemia2} is the following. Consider the vector space $L$ over $\mathbb{F}$ with basis $e_1, e_2, e_3, x$ endowed with the multiplication
    $$[e_1, x] = -[x, e_1] = x, \quad  [e_2, x] = - [x, e_2] = x, \quad  [e_1, e_2] = e_3.$$
    We have $A=\textrm{span}(e_2, e_3)$, $L_{e_2}$ is not nilpotent, $B = A+ \mathbb{F}e_1$, $I_L = \mathbb{F} e_3$ and $A' = \textrm{span}(x, e_3)$.
\end{remark}

\begin{lemma}\label{lemia3}
    Let $L$ be a Leibniz algebra of dimension $n$ over an arbitrary field $\mathbb{F}$ of characteristic $p\neq2$ with $\alpha(L) = n-2$. Let $A$ be an abelian subalgebra of codimension two such that $I_L\subseteq A$. Suppose $B = A + \mathbb{F}e_1$ is a subalgebra of $L$ and suppose $A$ is an ideal of $B$. Moreover, suppose $M(A)$ acts nilpotently in $L$ and suppose $B$ is not an ideal of $L$. Then $A$ is an abelian ideal or $L$ is a Lie algebra.
\end{lemma}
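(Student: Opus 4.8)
The plan is to choose coordinates adapted to the flag $0\subset A\subset B\subset L$ and then to split the argument on a single linear functional. First I would exploit the nilpotency of $M(A)$. Since $A$ is abelian and an ideal of $B$, each $L_a$ with $a\in A$ kills $A$ and sends $e_1$ into $A$, so it maps $B$ into $A$; as $[a,B]\subseteq A\subseteq B$, the map $L_a$ preserves $B$ and induces a scalar on $L/B\cong\mathbb{F}$, which must vanish by nilpotency. Hence $[A,x]\subseteq B$, and writing $L=A\oplus\mathbb{F}e_1\oplus\mathbb{F}x$ we get $[a,x]=\alpha(a)+\beta(a)e_1$ for linear maps $\alpha\colon A\to A$ and $\beta\colon A\to\mathbb{F}$, with no $x$-component. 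I would also record the two ambient facts used throughout: $A\subseteq B$ being an ideal gives $[a,e_1],[e_1,a]\in A$, and for all $u,v$ one has $[u,v]+[v,u]\in I_L\subseteq A$ with $[I_L,L]=0$.

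Next I would reduce everything to $\beta$. From $[x,a]=-[a,x]+\bigl([a,x]+[x,a]\bigr)$ and $[a,x]+[x,a]\in A$, the element $[x,a]$ has zero $x$-component and $e_1$-component $-\beta(a)$; combining this with the first paragraph shows that $[A,L]\subseteq A$ and $[L,A]\subseteq A$ both hold if and only if $\beta=0$. Thus if $\beta=0$ then $A$ is an abelian ideal of codimension two and we are done. The identical bookkeeping applied to $B$ shows that $B$ fails to be an ideal precisely when the $x$-component $r$ of $[e_1,x]$ is nonzero; this is exactly where the hypothesis that $B$ is not an ideal enters. I would therefore assume from now on $\beta\neq0$ and $r\neq0$ and aim to prove $I_L=0$, i.e. that $L$ is a Lie algebra.

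In this remaining case I would normalize: choosing $a_0\in A$ with $\beta(a_0)=1$ and absorbing the $A$-part of $[a_0,x]$ into $e_1$ (which leaves $r$ unchanged), I may assume $[a_0,x]=e_1$. The clean structural step is that $L_{a_0}$ is a derivation annihilating $I_L$, since $[a_0,I_L]\subseteq[A,A]=0$. Applying it to $[a,x]+[x,a]\in I_L$ and using $[a_0,a]=0$, $[a_0,x]=e_1$ collapses the identity to $[a,e_1]+[e_1,a]=0$ for all $a\in A$, so $L_{e_1}|_A=-R_{e_1}|_A$. Expanding a general square $[u,u]$ with $u=a+se_1+tx$ in the decomposition $A\oplus\mathbb{F}e_1\oplus\mathbb{F}x$, the $e_1$- and $x$-parts cancel and, after using the vanishing just obtained, one finds that $I_L$ is spanned only by the symmetric brackets $\gamma(a):=[a,x]+[x,a]$ together with $[e_1,e_1]$ and $[x,x]$. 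It then remains to show each of these is zero.

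The main obstacle is this last step. Feeding the triples from $\{a_0,e_1,x,a\}$ into the Leibniz identity $[u,[v,w]]=[[u,v],w]+[v,[u,w]]$ and reading off components produces, as the decisive relation,
\[
[e_1,\gamma(a)] = r\,\gamma(a) + \gamma([e_1,a]) \qquad (a\in A).
\]
Because $\gamma$ maps $A$ into $I_L$ and $L_{e_1}$ preserves both $A$ and $I_L$, this is a twisted intertwining relation, and $r\neq0$ is its whole force: the identity shows that on the image $\gamma(A)\subseteq I_L$ the operator $L_{e_1}$ acts as $L_{e_1}|_A+r$, so a nonzero $\gamma$ would produce an eigenvalue $r$ of $L_{e_1}$ inside $I_L\subseteq A$, which is incompatible with the nilpotent behaviour forced by $M(A)$ (and with $\alpha(L)=n-2$, since such a direction cannot be absorbed into the maximal abelian $A$). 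Running the same propagation through $[e_1,x]$ and $[x,e_1]=-[e_1,x]$ is what kills $[e_1,e_1]$ and $[x,x]$, with $p\neq2$ needed to cancel the factors of $2$ that appear whenever a symmetric bracket is hit twice. I expect the genuinely laborious part to be organizing these several Leibniz computations and the accompanying eigenvalue/nilpotency incompatibility, rather than any isolated conceptual difficulty; once $\gamma=0$, $[e_1,e_1]=0$ and $[x,x]=0$ are established, $I_L=0$ and $L$ is a Lie algebra, completing the dichotomy.
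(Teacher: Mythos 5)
Your reductions up to the intertwining relation are correct, and in places cleaner than the paper's own: the dichotomy on the functional $\beta$, the observation that non-ideality of $B$ is exactly $r\neq 0$, the normalization $[a_0,x]=e_1$, and the derivation of $[a,e_1]+[e_1,a]=0$ by applying the derivation $L_{a_0}$ (which annihilates $I_L$) to $[a,x]+[x,a]$ all check out, as does the relation $[e_1,\gamma(a)]=r\,\gamma(a)+\gamma([e_1,a])$. One bookkeeping slip: expanding $[u,u]$ for $u=a+se_1+tx$ also produces the cross term $st\bigl([e_1,x]+[x,e_1]\bigr)$, so this symmetric bracket is a fourth generator of $I_L$; you omit it and later implicitly set it to zero by writing $[x,e_1]=-[e_1,x]$, which is part of what has to be proved.

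The decisive step, however, fails. Your contradiction rests on $L_{e_1}$ having to act nilpotently on $A$ (hence on $I_L$), but the hypothesis is only that $M(A)=\{L_a : a\in A\}$ acts nilpotently, and $e_1\notin A$. In fact, under your standing assumptions $\beta\neq 0$, $r\neq 0$, the operator $L_{e_1}|_A$ is \emph{necessarily} non-nilpotent: the paper's proof shows $[e_1,a_0]\neq 0$ and $[e_1,[e_1,a_0]]=-r\,[e_1,a_0]$, so $-r$ is an eigenvalue. A concrete witness is $L=\mathfrak{sl}_2(\mathbb{F})$ with $A=\mathbb{F}e$, $e_1=h$, $x=f$: every hypothesis of the lemma holds ($L_e$ is nilpotent, $B=\mathbb{F}e+\mathbb{F}h$ is a subalgebra and not an ideal, $I_L=0$), yet $L_{e_1}|_A$ has eigenvalue $2=-r$. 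So the ``incompatibility with nilpotent behaviour'' you invoke does not exist, and the parenthetical appeal to $\alpha(L)=n-2$ is not an argument. What the relation $[L_{e_1}|_A,\gamma]=r\gamma$ really yields is only that $\gamma$ shifts generalized eigenspaces of $L_{e_1}|_A$ by $r$; since the spectrum can be $\{0,-r\}$, this kills $\gamma$ on the generalized $0$-eigenspace but leaves open a rank-one $\gamma$ carrying the $(-r)$-eigenline into $\ker L_{e_1}$ (note $L_{e_1}\gamma(c)=r\gamma(c)+\gamma(-rc)=0$ for $c$ in that line), and it says nothing about $[e_1,e_1]$, $[x,x]$, or $[e_1,x]+[x,e_1]$. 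The paper closes exactly this hole by first proving $L_{e_1}$ and $R_{e_1}$ annihilate $\ker\beta$ (so $L_{e_1}|_A$ has rank at most one), then splitting on $[e_1,a_0]=0$ (which contradicts $r\neq 0$) versus $[e_1,a_0]\neq 0$, computing the eigenvalue $-r$, and killing the remaining symmetric brackets one at a time with explicit Leibniz identities. Some replacement for that finishing analysis is required; as written, your proof has a genuine gap at its main step.
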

\begin{proof}    
    Suppose $A$ is not an ideal of $L$. Let $e_2, \ldots, e_{n-1}$ be a basis of $A$. Complete the basis of $L$ and write $L=B+\mathbb{F}e_n$. Suppose there is some $a\in A$ such that $L_a(e_n)\not\in B$, then $L_a$ is not nilpotent. Therefore $L_A(L)\subset B$, that is, we have $[A,L]\subset B$. Moreover, since $I_L\subset A$, then we have $[L, A]\subset B$. Since $A$ is not an ideal of $L$, we can assume $[e_2, e_n] = e_1$, using again that $I_L\subset A$. Denote $[e_i, e_n] = \sum_{k=1}^{n-1} \alpha_{ik} e_k$ for $\alpha_{ik}\in \mathbb{F}$ with $3\leq i\leq n-1$. Then if $v_i = e_i - \alpha_{i1}e_2$  we have $[v_i, e_n], [e_n, v_i] \in A$. Thus, we have $$[e_1, v_i] = [[e_2, e_n], v_i] = [e_2, [e_n, v_i]] - [e_n, [e_2, v_i]] = 0, \quad  [v_i, e_1] = [v_i, [e_2, e_n]] = [[v_i, e_2], e_n] + [e_2, [v_i, e_n]] = 0.$$
    Assume without loss of generality that $e_i = v_i$ for $3\leq i\leq n-1$. Let us show that $L$ is anticommutative. 
    
    First, observe that $0 = [e_n, [e_2, e_2]] = [[e_n, e_2], e_2] + [e_2, [e_n, e_2]]$ implies that $[e_1, e_2] = - [e_2, e_1]$.     
   Now, since $B$ is not an ideal of $L$, we have $\mathbb{F}[e_n, e_1] + \mathbb{F}[e_1, e_n]\not\subset B$ and we can assume $[e_1, e_n]\not\in B$. Let us write $[e_1, e_n] = \lambda e_n + b$ for some $\lambda \in \mathbb{F}$ and $b\in B$ with $\lambda\neq0$. Easily, we obtain $[e_1, e_n] = -[e_n, e_1]$. Also, if $[e_1, e_2]=0$, then $$\lambda e_1 + [e_2, b]=\lambda [e_2, e_n] + [e_2, b] = [e_2, [e_1, e_n]] = [e_1, [e_2, e_n]] = [e_1, e_1] \in I_L \subset A,$$ 
   which is a contradiction. Assume $[e_1, e_2]\neq0$, but then $[e_1, [e_1, e_2]] = \mu [e_1, e_2]$ for some $\mu \in \mathbb{F}$. Denote $b=\sum_{i=1}^{n-1} b_i e_i$ and $[e_1, e_2] = \sum_{i=2}^{n-1} \gamma_i e_i$. It follows $\gamma_2=\mu$. By the equation
   $$e_1=[e_2, e_n] = \lambda^{-1}([e_2, [e_1, e_n]] - [e_2, b]) = \lambda^{-1}([[e_2, e_1], e_n] + [e_1, e_1]-b_1 [e_2, e_1]),$$
   we have $\lambda = -\mu$. So the equations $[e_i, [e_1, e_n]] = [[e_i, e_1], e_n] + [e_1, [e_i, e_n]]$ and $[e_1, [e_n, e_i]] = [[e_1, e_n], e_i] + [e_n, [e_1, e_i]]$, imply that $[e_n, e_i] = [e_i, e_n] = 0$ for $3\leq i\leq n-1$. 
   Furthermore, since $[[e_i, e_i], e_1] = 0$ and $[e_2, e_1]\neq 0$, we have that $[e_1, e_1], [e_n, e_n] \in \textrm{span}(e_i: 3\leq i\leq n-1)$. But then, $[e_2, [e_1, e_n]] = [[e_2, e_1], e_n] + [e_1, [e_2, e_n]]$ implies that $[e_1, e_1] = 0$, because $\gamma_2\neq0$.
    Finally, the equation $[e_2, [e_n, e_1]] = [[e_2, e_n], e_1] + [e_n, [e_2, e_1]]$ implies that $[e_n, e_2]=-e_1$ and the equation $0 =[e_1, [e_n, e_n]] = [[e_1, e_n], e_n] + [e_n, [e_1, e_n]] = 2 [e_n, e_n]$ implies that $[e_n, e_n] = 0$. Hence, we conclude that $L$ is a Lie algebra.
\end{proof}

\begin{remark}
    The Lie algebras of Lemma \ref{lemia3} are studied in the case $(2)$ of the proof of \cite[Theorem 4.2]{FNT24}. This algebras are precisely of the form $L = L_1(\gamma) \oplus \mathbb{F}^{n-3}$, where $L_1(\gamma)$ is a simple Lie algebra studied in \cite{A76b}. For this algebras, we have $\beta(L) = n-3$. Note that $\mathfrak{d}(m)\cong L_1(\gamma)$, when $m\in \mathfrak{sl}_2(\mathbb{F})$ is reducible, for some $\gamma$ depending on $m$. Also, if the field has characteristic $p\neq 2$, then $L_1(\gamma)\cong \mathfrak{sl}_2(\mathbb{F})$.
\end{remark}

Given a nilpotent algebra $N$ of dimension $n$, a $s$-dimensional solvable extension of $N$ is a solvable algebra $L$ of dimension $n+s$ having $N$ as its nilradical. For instance, the algebras in the families $\mathfrak{a}, \mathfrak{b}$ are two-dimensional solvable extensions of the two-dimensional abelian algebra. Likewise, the algebras in  $\mathfrak{c}$ are one-dimensional solvable extensions of the Heisenberg algebra

\begin{remark}
    Let us recall the definition of the classical oscillator Lie algebra. The oscillator algebra $\mathfrak{os}(\mathbb{F})$  is the vector space over $\mathbb{F}$ with basis $e_{-1}, e_{0}, e_{1}, \hat{e}_{1}$ together with the bracket given by
$$[e_{-1}, e_{1}] = -[e_{1}, e_{-1}] = \hat{e}_{1}, \quad [e_{-1}, \hat{e}_{1}] = -[\hat{e}_{1}, e_{-1}] = - e_1,  \quad [e_{1}, \hat{e}_{1}] = -[\hat{e}_{1}, e_{1}] = e_{0}.$$
The algebra $\mathfrak{os}(\mathbb{F})$ is a one-dimensional solvable extension of the Heisenberg algebra $\mathfrak{h}(\mathbb{F})$, which is spanned by $e_1, \hat{e}_1, e_0$. 
The algebras such that $\alpha(L)\neq \beta(L)$ in the next lemma are a special type of solvable extensions of the algebra $\mathfrak{h}(\mathbb{F}) \oplus \mathbb{F}^{k}$. For example, if  $\mathbb{F} = \mathbb{R}$, one of this extensions is $\mathfrak{os}(\mathbb{R})$, for which we have $2= 
\alpha(\mathfrak{os}(\mathbb{R})) \neq \beta(\mathfrak{os}(\mathbb{R})) = 1$. 
\end{remark}

We denote by $\mathfrak{e}(\varphi, \vartheta, v, n)$ the one-dimensional solvable extension of the algebra $(\mathfrak{h}(\mathbb{F}) \oplus \mathbb{F}^{n-4}, [\cdot, \cdot]):=H$ with vector space $\mathbb{F}x \oplus H$ and multiplication $[\lambda x + a, \mu x + b]_{\mathfrak{e}} = \lambda \mu v + \lambda \varphi(b) + \mu \vartheta(a) + [a, b]$ for $a, b \in H$ and $\lambda, \mu \in \mathbb{F}$, where $v\in C(H)$ and $\varphi$ is a left derivation of $H$, write $\varphi\in \textrm{Der}_l(H)$, and $\vartheta: H \rightarrow H$ is a linear map. The following result is obtained.

\begin{lemma}\label{lemia1}
    Let $L$ be a Leibniz algebra of dimension $n$ over an arbitrary field $\mathbb{F}$ {of characteristic $p\neq2$} with $\alpha(L) = n-2$. Let $A$ be an abelian subalgebra of codimension two such that $I_L\subseteq A$. Suppose $B = A + \mathbb{F}e_1$ is a ideal of $L$ and suppose $A$ is an ideal of $B$. Then $\beta(L)=n-2$  or     
    $L \cong  \mathfrak{e}(\varphi, \vartheta, v, n)$ for certain $v\in  I_L$ and    
    $\varphi \in \textrm{Der}_l(B)$ such that the induced map $\varphi^* = \vartheta^*:B/C(B)\rightarrow B/C(B)$ is irreducible. In the second case, we have $\textrm{Nil}(L) = B \cong \mathfrak{h}\oplus \mathbb{F}^{n-4}$, $\beta(L)=n-3$ and $C(B)$ is the unique abelian ideal of maximal dimension.
\end{lemma}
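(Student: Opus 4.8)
Since $A$ is an abelian ideal of codimension one in $B$, the algebra $B$ is metabelian with $[B,B]\subseteq A$, and $B$ is not abelian, as otherwise $B$ would be an abelian ideal of dimension $n-1>\alpha(L)$; a short computation then gives $C(B)\subseteq A$. Because $A$ is an ideal of $B$ and $B$ is an ideal of $L$, one checks $L_a^3=0$ for every $a\in A$, so $M(A)$ acts nilpotently on $L$. Writing $L=B+\mathbb{F}e_n$, the element $e_n$ provides the data of an $\mathfrak{e}$-presentation: by the Leibniz rule $\varphi:=L_{e_n}|_B\in\textrm{Der}_l(B)$, while $\vartheta:=R_{e_n}|_B$ is linear and $v:=[e_n,e_n]\in I_L$. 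A direct use of the Leibniz identity shows that $\varphi$ and $\vartheta$ preserve $C(B)$, hence induce maps $\varphi^*,\vartheta^*$ on $B/C(B)$; moreover $(\varphi+\vartheta)(b)=[e_n,b]+[b,e_n]\in I_L$, and once we verify $I_L\subseteq C(B)$ this yields $\varphi^*=\vartheta^*$. Thus, as soon as we know that $B=\textrm{Nil}(L)$ and $v\in C(B)$, the algebra $L$ is precisely $\mathfrak{e}(\varphi,\vartheta,v,n)$.

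\textbf{Reduction to $B=\textrm{Nil}(L)$.} If $L$ is nilpotent, the result for nilpotent Leibniz algebras recalled in the introduction \cite{CT23} gives $\beta(L)=n-2$, which is the first alternative; so assume $L$ is not nilpotent, whence $\textrm{Nil}(L)\subsetneq L$ has codimension at least one. I would then treat the possibility that $B$ is not nilpotent, i.e. that $L_{e_1}|_A$ is not nilpotent: here the Fitting decomposition of $A$ with respect to $L_{e_1}$ separates off a nonzero component on which $L_{e_1}$ acts invertibly, and enlarging the complementary $0$-component by a suitable element produces an abelian ideal of codimension two, again giving $\beta(L)=n-2$. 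In the remaining case $B$ is nilpotent, and since $B$ has codimension one while $L$ is not nilpotent, we conclude $\textrm{Nil}(L)=B$.

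\textbf{The dichotomy on $\varphi^*$.} With $B=\textrm{Nil}(L)$ nilpotent and metabelian, I would study the induced operator $\varphi^*$ on $V:=B/C(B)$. If $\varphi^*$ has a proper nonzero invariant subspace, I would pull it back to $B$, combine it with the appropriate part of $C(B)$, and use that $\varphi,\vartheta$ preserve $C(B)$ to exhibit an abelian ideal of $L$ of dimension $n-2$, so that $\beta(L)=n-2$. If instead $\varphi^*$ is irreducible, then together with $\alpha(L)=n-2$ and the metabelian structure this should force $B$ to be $2$-step nilpotent with $[B,B]$ one-dimensional and $V$ a two-dimensional symplectic space, that is $B\cong\mathfrak{h}(\mathbb{F})\oplus\mathbb{F}^{n-4}$ with $C(B)$ of dimension $n-3$. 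In that situation the presentation of the first paragraph identifies $L$ with $\mathfrak{e}(\varphi,\vartheta,v,n)$ for $\varphi^*=\vartheta^*$ irreducible, and it remains to verify $\beta(L)=n-3$ by showing that $L$ has no abelian ideal of dimension $n-2$ and that $C(B)$ is the unique abelian ideal of dimension $n-3$.

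\textbf{Main obstacle.} I expect the heart of the argument to be the irreducible case: deducing from the irreducibility of $\varphi^*$, together with the maximality $\alpha(L)=n-2$, that $[B,B]$ is exactly one-dimensional and $B$ is the Heisenberg-plus-centre algebra, and then establishing both the nonexistence of an abelian ideal of codimension two and the uniqueness of $C(B)$ as the maximal abelian ideal. The reducible case and the non-nilpotent case of $B$ amount to extracting invariant abelian ideals and should be routine, but they still demand careful bookkeeping with the Leibniz identity and with the difference between the left multiplication $\varphi$ and the right multiplication $\vartheta$.
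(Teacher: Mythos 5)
Your proposal has the same skeleton as the paper's proof (reduce to $B=\textrm{Nil}(L)$, then a dichotomy on the map induced by $e_n$ on a two-dimensional quotient of $B$: an invariant line gives a codimension-two abelian ideal, irreducibility gives $\mathfrak{e}(\varphi,\vartheta,v,n)$), and your setup contains correct observations. One is even a small improvement: since $[A,L]\subseteq B$ ($B$ is an ideal) and $[A,B]\subseteq A$ ($A$ is an ideal of $B$), you get $L_a^3=0$ for all $a\in A$, so $M(A)$ acts nilpotently automatically, whereas the paper disposes of the non-nilpotent alternative by citing Lemma~\ref{lemia2}. However, the proposal defers exactly the steps where the proof lives, and one of the steps you do sketch would fail as described. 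For the case ``$B$ not nilpotent'' you take the Fitting decomposition of $A$ under $L_{e_1}$ and claim that enlarging the null component produces an abelian ideal of $L$ of codimension two. That construction uses only data internal to $B$; an abelian ideal of $L$ must in addition be invariant under $L_{e_n}$ and $R_{e_n}$ for $e_n\notin B$, and nothing in the sketch controls those actions. In the paper, nilpotency of $B$ (indeed $B\cong\mathfrak{h}(\mathbb{F})\oplus\mathbb{F}^{n-4}$) is a \emph{conclusion} of computations involving $e_n$, valid whenever $\beta(L)\neq n-2$; it is not a case that can be settled inside $B$ alone.

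The deeper gap is the irreducible case, which you yourself flag as the ``main obstacle'': the assertion that irreducibility of $\varphi^*$ together with $\alpha(L)=n-2$ ``should force'' $B$ to be Heisenberg-plus-centre is essentially the whole content of the lemma, and it is not routine. In particular, a priori $B$ is only a metabelian \emph{Leibniz} algebra; your identification of $B/C(B)$ as a symplectic plane presupposes that the induced bracket is alternating, i.e.\ that $B$ is a Lie algebra. The paper obtains this only through a chain of Leibniz identities that crucially involve $e_n$: normalizing $[e_2,e_n]=e_1$, deriving $[e_1,e_i]=-[e_i,e_1]$, building $Z=\textrm{span}(v_i)$ with $[Z,B]=[B,Z]=0$, eliminating the cases $I_L\not\subseteq Z$, $[e_1,e_2]\notin Z$, $[e_1,e_2]=0$ (each yields a codimension-two abelian ideal or contradicts $\alpha(L)=n-2$), and finally $2[e_1,e_1]=[[e_1,e_n],e_2]-[e_2,[e_n,e_1]]=0$, so that $B$ is a Lie algebra with $[B,B]=\mathbb{F}[e_1,e_2]$ and $C(B)=Z$ of dimension $n-3$. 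Only after this does the dichotomy on the two-dimensional quotient $B/Z$ make sense, with an eigenvector lifting to the abelian ideal $Z+\mathbb{F}v'$ of dimension exactly $n-2$; your plan runs the dichotomy before knowing $\dim C(B)$ or that $[W,W]=0$ for the pulled-back subspace, so even the reducible branch cannot yet produce an ideal of dimension $n-2$ as stated. A minor point in the same direction: from $(\varphi+\vartheta)(b)\in I_L\subseteq C(B)$ one gets $\varphi^*=-\vartheta^*$, not $\varphi^*=\vartheta^*$; this is harmless for irreducibility (and the paper's own statement shares the sign quirk), but it is not a derivation of the stated equality.
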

\begin{proof}   
    Suppose $A$ is not an ideal of $L$. Assume $M(A)$ acts nilpotently in $L$, otherwise the result follows by Lemma~\ref{lemia2}. Let $e_2, \ldots, e_{n-1}$ be a basis of $A$. Suppose $[A, L]\subset A$, then since $I_L\subset A$, we have $[L, A]\subset A$ and $A$ is an ideal.  So assume $[e_2, e_n]\not\in A$ for some $e_n\in L$ with $e_n\not\in B$. Moreover, since $B$ is an ideal, we can assume $[e_2, e_n] = e_1$. Write $[e_i, e_n] = \sum_{k=1}^{n-1} \alpha_{ik} e_k$ with $\alpha_{ik}\in \mathbb{F}$. Then  for $2\leq i\leq n-1$, we have
    $$[e_i, e_1] = [e_i, [e_2, e_n]] = [e_2, [e_i, e_n]] = \alpha_{i1}[e_2, e_1], \quad \quad [e_1, e_i] = [[e_2, e_n], e_i] = [e_2, [e_n, e_i]] = -\alpha_{i1}[e_2, e_1],$$
    where the second equation uses that $I_L\subset A$. Then, we have $[e_1, e_i] = -[e_i, e_1]$ for $2\leq i\leq n-1$.

    Moreover, denote $v_i = e_i - \alpha_{i1} e_2$ and $Z= \textrm{span}(v_i: 3\leq i\leq n-1)$. Then $[Z, B] = 0$ and $[B, Z] = 0$. 
    Suppose $I_L\not\subset Z$, then $A = Z+I_L$ and we have $[v_i, e_n] = [e_i - \alpha_{i1} e_2, e_n] \in A$ and $[e_n, v_i] \in A$, because $I_L\subset A$.  Hence, $A$  is an abelian ideal.  Similarly, if $[e_1, e_2]\not\in Z$, then $A = Z + \mathbb{F}[e_1, e_2]$ is an ideal. Also, if $[e_1, e_2] = 0$, then $B$ is an abelian subalgebra, since $[e_1, e_1] = [e_1,[e_2, e_n]] = [e_2, [e_1, e_n]] = 0$. The previous assumptions lead to a contradiction, so assume $I_L\subset Z$, $[e_1, e_2]\in Z$ and $[e_1, e_2] \neq 0$.

    Now, we show that $Z$ is an abelian ideal of  $L$. Note that  $[e_1, [v_i, e_n]] = [[e_1, v_i], e_n] + [v_i, [e_1, e_n]] = 0$, implying $\sum_{k=2}^{n-1} \alpha_{ik} [e_1, e_k] = 0$ and $\sum_{k=2}^{n-1} \alpha_{ik} \alpha_{k1} = 0$, so  $\alpha_{i2}=-\sum_{k=3}^{n-1} \alpha_{ik} \alpha_{k1}$. Then $[v_j, e_n] = \sum_{k=2}^{n-1} \alpha_{ik} e_k = \sum_{k=3}^{n-1} \alpha_{ik}v_j$, concluding $[v_j, e_n] \in Z$ and $[e_n, v_j]\in Z$, since $I_L\subset Z$. 
    Next, by the equations 
    $$[e_1, e_1] = [e_1, [e_n, e_2]] = [[e_1, e_n], e_2] + [e_n, [e_1, e_2]],$$
    $$[e_n, [e_2, e_1]] = [[e_n, e_2], e_1] + [e_2, [e_n, e_1]] = [e_1, e_1] + [e_2, [e_n, e_1]],$$
    we have $2[e_1, e_1] =  [[e_1, e_n], e_2] - [e_2, [e_n, e_1]] = 0$, so $[e_1, e_1] = 0$.  
     
     Hence, $B$ is a Lie algebra. In fact, $B = \mathfrak{h}(\mathbb{F}) \oplus \mathbb{F}^{n-4}$, where $\mathfrak{h}(\mathbb{F})$ is the Heisenberg algebra.     
     Now, consider $A' = Z + \mathbb{F}e_1$. The space $A'$ is an abelian subalgebra of codimension two. Moreover, we have $I_L\subset A'$ and $A'$ is an ideal of $B$.     
     By Lemma \ref{lemia2} we can assume $A'$ acts nilpotently, otherwise we have $\beta(L) = n-2$. Therefore, we have that $M(B)$ acts nilpotently in $L$. If $L_{e_n}$ is nilpotent, then $L$ is nilpotent by the Engel's theorem for Leibniz algebras, see \cite[Theorem 2.1, p.44]{AOR20}. In that case, we have $\beta(L) = n-2$ by \cite[Proposition 3.2]{CT23}.
     So suppose $L_{e_n}$ is not nilpotent. 

     Furthermore, consider the induced map $L_{e_n}^{*}:B/Z\rightarrow B/Z$. If $v\in B/Z$ is an eigenvector of $L_{e_n}^{*}$, then there is some $v'\in B$ such that $L_{e_n}(v') \in Z + \mathbb{F}v'$ and we have that $Z + \mathbb{F}v'$ is an abelian ideal of codimension two. Therefore, assume $L_{e_n}^{*}$ is irreducible in $B/Z$.
     Now, suppose $A'$ is an abelian ideal of maximal dimension of $L$. Then we can easily see that $A'\subset B$ and then $Z\subset A'$. By this fact and $L_{e_n}^{*}$ being irreducible in $B/Z$, we conclude that $Z$ is an abelian ideal of maximal dimension, i.e. we have $\beta(L)=n-3$. The statement in the lemma follows.
\end{proof}

\begin{remark}
    { 
    An example of a Leibniz non-Lie algebra of the type $\alpha(L)\neq \beta(L)$ of the Lemma~\ref{lemia1} is the following. Consider the vector space $L$ over $\mathbb{R}$ with basis $e_1, e_2, e_3, e_4$ endowed with the multiplication
    $$[e_1, e_2] = -[e_2, e_1] = e_3, \quad  [e_1, e_4] = - [e_4, e_1] = -e_2, \quad  [e_2, e_4]=-[e_4, e_2] = e_1, \quad [e_4, e_4] = e_3.$$
    We have $A=\textrm{span}(e_2, e_3)$, $L_{e_4}$ is not nilpotent, $B = A+ \mathbb{F}e_1$ is the nilradical of $L$, $I_L = \mathbb{F} e_3$ and $L\cong \mathfrak{e}(\varphi, v, n)$, where $v=e_3$ and $\varphi: B \rightarrow B$ is given by $\varphi(e_1) = e_2, \varphi(e_2) = -e_1$ and $\varphi(e_3) = 0$. Note that the map $\varphi^*$ is irreducible.    
    }
\end{remark}

The previous lemmas together with the previous section already characterize the Lie algebras with an abelian subalgebra of codimension two. 
In the next theorem, we complete this description for Leibniz non-Lie algebras.

\begin{theorem}
    Let $L$ be a Leibniz non-Lie algebra of dimension $n$ over an arbitrary field $\mathbb{F}$ of characteristic $p\neq2$ with $\alpha(L) = n-2$. Let $A$ be an abelian subalgebra of codimension two. Suppose there is a subalgebra $B$ of $L$ of codimension one containing $A$. Then $\beta(L)=n-2$ or $L$ is $3$-step solvable and     
    $L \cong  \mathfrak{e}(\varphi, \vartheta, v, n)$ for certain $v\in  I_L$ and    
    $\varphi \in \textrm{Der}_l(B)$ such that the induced map $\varphi^* = \vartheta^*:B/C(B)\rightarrow B/C(B)$ is irreducible, where $B=\textrm{Nil}(L)\cong \mathfrak{h}\oplus \mathbb{F}^{n-4}$ (see Lemma~\ref{lemia1}).  
\end{theorem}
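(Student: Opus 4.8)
The plan is to feed the configuration into the three preceding lemmas, which already settle the case $I_L\subseteq A$ with $A$ an ideal of a codimension-one subalgebra, and to dispose of the complementary case $I_L\not\subseteq A$ by hand. First I would record the standing reductions. Since $L$ is non-Lie we have $I_L\neq 0$ and, by the Leibniz rule, $I_L\subseteq\textrm{Ann}_{\ell}(L)$; since $A$ is abelian of maximal dimension, $A+C(L)$ is again abelian, whence $C(L)\subseteq A$. Finally, any abelian ideal is an abelian subalgebra, so $\beta(L)\leq\alpha(L)=n-2$, and to prove ``$\beta(L)=n-2$'' it suffices to exhibit a single abelian ideal of codimension two.

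Next I would treat the case $I_L\subseteq A$, reducing to the lemmas by arranging that $A$ be an ideal of a codimension-one subalgebra. If $A$ is already an ideal of $L$, then $\beta(L)\geq n-2$ and we are done. Otherwise $A\subsetneq N(A)\subsetneq L$ forces $N(A)$ to have codimension one, and replacing $B$ by $N(A)$ makes $A$ an ideal of $B$; in the remaining situation $N(A)=A$ I would pass from the given $B$ to a codimension-one abelian ideal of $B$ (which exists by \cite[Theorem 2.1]{CT23}) containing $I_L$ and rename it $A$. With $A$ an ideal of $B$ in hand, I would split according to $B$: if $B$ is an ideal of $L$, Lemma~\ref{lemia1} yields either $\beta(L)=n-2$ or the $3$-step solvable algebra $L\cong\mathfrak{e}(\varphi,\vartheta,v,n)$ with $B=\textrm{Nil}(L)\cong\mathfrak{h}\oplus\mathbb{F}^{n-4}$; if $B$ is not an ideal of $L$, then Lemma~\ref{lemia2} gives $\beta(L)=n-2$ when $M(A)$ does not act nilpotently, while Lemma~\ref{lemia3} applies when it does and, since $L$ is non-Lie, forces $A$ itself to be an abelian ideal, again giving $\beta(L)=n-2$.

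For the case $I_L\not\subseteq A$ I would argue directly, in the spirit of the $I_L\not\subseteq A$ branch of Theorem~\ref{thmmax}. Because $I_L$ is an ideal with $I_L\subseteq\textrm{Ann}_{\ell}(L)$ and $A$ has codimension two, $A+I_L$ equals $L$ or a codimension-one subalgebra, and in both subcases $[A,I_L]\subseteq I_L$ while $[I_L,\,\cdot\,]=0$, yielding a metabelian piece on which $A$ acts on $I_L$ through commuting operators. Setting $W=\{a\in A:[a,I_L]=0\}$, the subspace $V=I_L+W$ is an abelian subalgebra, and the centralising relations make it an ideal (when $A+I_L\neq L$ one must additionally check invariance under a complementary generator). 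Invoking $\alpha(L)=n-2$ to bound the commuting action of $A$ on $I_L$ then forces $V$ to have codimension exactly two, so $\beta(L)=n-2$; this branch produces no $\mathfrak{e}$-type algebra, consistently with $I_L\subseteq A$ holding in $\mathfrak{e}(\varphi,\vartheta,v,n)$.

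The main obstacle is the bookkeeping needed to place oneself inside the lemmas' hypotheses: ensuring that $A$, or a codimension-two abelian substitute still containing $I_L$, is an ideal of a codimension-one subalgebra, where the self-normalizing configuration $N(A)=A$ together with retaining $I_L$ inside the replacement is the delicate point. The second genuinely new ingredient is the codimension count for the ideal $V$ when $I_L\not\subseteq A$, including the invariance check under the outer generator, which is exactly where the maximality hypothesis $\alpha(L)=n-2$ must be used. Once both are settled, the three lemmas close every branch and deliver the stated alternative.
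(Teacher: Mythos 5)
Your treatment of the case $I_L\subseteq A$ follows the paper's route (reduction to Lemmas~\ref{lemia1}, \ref{lemia2} and \ref{lemia3} according to whether $B$ is an ideal of $L$ and whether $M(A)$ acts nilpotently), but it contains one unjustified step: when $N(A)=A$ you invoke \cite[Theorem 2.1]{CT23} to replace $A$ by a codimension-one abelian ideal of $B$ ``containing $I_L$''. That theorem produces \emph{some} abelian ideal of $B$ of codimension one; nothing forces it to contain $I_L$, which is spanned by the squares $[x,x]$ of elements of all of $L$, including elements outside $B$, and so is not controlled by the internal structure of $B$. The paper avoids this issue by performing the replacement \emph{first} and only afterwards splitting into the cases $I_L\subseteq A$ and $I_L\not\subseteq A$ for the new $A$. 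Your version can be repaired in the same spirit (if the new ideal fails to contain $I_L$, fall into your second branch) --- but only if your second branch is actually proved, and that is where the real gap lies.

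The case $I_L\not\subseteq A$ is not established in your proposal. The paper's key move here is entirely different from yours: it first shows that $L$ is solvable (a short case analysis using the Fitting decomposition of $L$ relative to $M(A)$), and then argues that if $\beta(L)\neq n-2$, then $L$ must be as in \cite[Theorem 3.5 (iii)]{CT23}, whence the nilradical $N$ satisfies $A+I_L\subseteq N$ and $I_L\subseteq C(N)$; this makes $A+I_L$ an abelian subalgebra strictly larger than $A$, contradicting $\alpha(L)=n-2$. Your substitute --- the direct construction $V=I_L+W$ with $W=\{a\in A:[a,I_L]=0\}$ --- leaves unproved exactly the two statements it needs, both of which you flag but do not close. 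First, when $A+I_L\neq L$ there is no reason why $[x,W]$ and $[W,x]$ should lie in $I_L+W$ for a complementary generator $x$, so $V$ need not be an ideal. Second, the codimension count is not a consequence of $\alpha(L)=n-2$: maximality only yields the upper bound $\dim V\leq n-2$. The lower bound requires $\dim(A/W)\leq\dim\bigl(I_L/(A\cap I_L)\bigr)$, and this is false for general commuting families of operators annihilating $A\cap I_L$ (operators mapping $I_L$ into $A\cap I_L$ commute for free and can have arbitrarily large span); what rescues it, at least when $L=A+I_L$, is the extra identity $I_L=[A,I_L]$ (valid because $I_L$ is spanned by squares and $[I_L,L]=0$) together with a genuine analysis of the induced commuting action on the two-dimensional quotient $I_L/(A\cap I_L)$, and the case $\dim\bigl(I_L/(A\cap I_L)\bigr)=1$ with an outer generator is harder still. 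So the two ``delicate points'' you defer are precisely the mathematical content of this branch; as written, the proposal does not prove the theorem, and it misses the paper's actual mechanism (solvability plus the solvable classification of \cite{CT23}).
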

\begin{proof}
By \cite[Theorem 2.1]{CT23}, we can assume $A$ is an abelian ideal of dimension $n-2$ of $B$ as $p\neq2$.  Let $e_2, \ldots, e_{n-1}$ be a basis of $A$. Write $B=A+\mathbb{F}e_1$ and $L = B+\mathbb{F}e_n$. It follows that $B\subseteq N(A)$. If $N(A)=L$, then $A$ is an abelian ideal of $L$ and we have $\beta(L) = n-2$. So assume $N(A)=B$.     
Let us distinguish three situations depending on $I_L$. The case $I_L\subseteq A$ is already studied in Lemma~\ref{lemia2}, Lemma~\ref{lemia3} and Lemma~\ref{lemia1}, obtaining the respective claim in the statement. Assume now that $I_L\not\subset A$. If $ L=A+ I_L$, then $L$ is clearly solvable. 

So suppose that $L\neq A+I_L$. If $A$ acts nilpotently on $L$, there exists $k\geq 0$ such that $L_A^k(L)\not \subseteq A+I_L$, but $L_A^{k+1}(L)\subseteq A+I_L$. Let $x\in L_A^k(L)\setminus (A+I_L)$, so $[A,x]\subseteq A+I_L$. Then $L=A+I_L+\mathbb{F}x$, $L^2\subseteq A+I_L$ and $L$ is solvable. 
Suppose now that $A$ does not act nilpotently on $L$. Let $L=B\oplus L_1$ be the Fitting decomposition of $L$ relative to $M(A)$, and put $L_1=\mathbb{F}x$. If $B=A\oplus I_L$, we have $L^2\subseteq I_L+\mathbb{F}x$, $L^{(2)}\subseteq I_L$ and $L$ is solvable again. If $B\neq A+I_L$, then $L=B+I_L$, in which case $L^2\subseteq A+I_L$ and $L$ is solvable once more.

Finally, suppose that $\beta(L)\neq n-2$. Then $L$ must be as in \cite[Theorem 3.5 (iii)]{CT23}. Let $N$ be the nilradical of $L$. Then $A+I_L\subseteq N$ and $I_L \subseteq C(N)$, whence $A+I_L$ is abelian, a contradiction. The result follows.    
\end{proof}

\begin{corollary}
    Let $L$ be a Leibniz algebra of dimension $n$ over a quadratically closed field $\mathbb{F}$ {of characteristic $p\neq2$} with $\alpha(L) = n-2$. Then $\beta(L)=n-2$ or $L$ is isomorphic to the Lie algebra $\mathfrak{sl}_2(\mathbb{F}) \oplus \mathbb{F}^{n-3}$. 
\end{corollary}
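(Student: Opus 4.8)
The plan is to read the corollary directly off the classification in the main theorem (Theorem~\ref{mainthm}), using the single field-theoretic input that over a quadratically closed field $\mathbb{F}$ of characteristic $p\neq 2$ there are no irreducible monic quadratics: every element of $\mathbb{F}$ has a square root, so the quadratic formula splits any $t^2+bt+c$, and hence every endomorphism of a two-dimensional $\mathbb{F}$-vector space has an eigenvector and therefore fails to be irreducible. This one observation is what collapses the exceptional families.

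With this in hand I would run through the four alternatives of the main theorem. If $L$ has an abelian ideal of codimension $k\leq 2$, then since ideals are subalgebras we have $\beta(L)\leq\alpha(L)=n-2$, while the ideal itself has dimension $\geq n-2$; hence $\beta(L)=n-2$, which is the first alternative of the corollary. Alternative $(1)$ of the main theorem requires an irreducible $m\in\mathfrak{sl}_2(\mathbb{F})$, which does not exist over $\mathbb{F}$, so that case is vacuous. Alternative $(3)$ requires the induced map $\varphi^{*}=\vartheta^{*}$ on $B/C(B)$ to be irreducible, where $B=\mathfrak{h}(\mathbb{F})\oplus\mathbb{F}^{n-4}$; since $\dim B=n-1$ and $C(B)=C(\mathfrak{h}(\mathbb{F}))\oplus\mathbb{F}^{n-4}$ has dimension $n-3$, the quotient $B/C(B)$ is two-dimensional, so once more no irreducible map exists and this case is vacuous as well.

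The only surviving possibility is alternative $(2)$, namely $L\cong\mathfrak{d}(m)\oplus\mathbb{F}^{n-3}$ with $m\in\mathfrak{sl}_2(\mathbb{F})$. By the key fact $m$ is reducible, so the remark following Lemma~\ref{lemia3} applies and yields $\mathfrak{d}(m)\cong L_1(\gamma)\cong\mathfrak{sl}_2(\mathbb{F})$ (this is where $p\neq 2$ is used), whence $L\cong\mathfrak{sl}_2(\mathbb{F})\oplus\mathbb{F}^{n-3}$. I expect this last identification to be the only genuine subtlety of the argument: it amounts to the statement that an almost simple three-dimensional Lie algebra over a quadratically closed field of characteristic $\neq 2$ is forced to be the split form $\mathfrak{sl}_2(\mathbb{F})$, which rests on the classification of three-dimensional simple Lie algebras encoded in the cited isomorphisms. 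The three other cases reduce instantly to the nonexistence of irreducible quadratics, so I anticipate no computational difficulty there.
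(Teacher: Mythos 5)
Your proposal is correct and is precisely the argument the paper leaves implicit (the corollary is stated without proof, as a direct consequence of the main theorem): the cases requiring an irreducible quadratic characteristic polynomial --- alternative (1) and alternative (3), the latter because $B/C(B)$ is two-dimensional --- are vacuous over a quadratically closed field of characteristic $p\neq 2$, the abelian-ideal case gives $\beta(L)=n-2$, and alternative (2) collapses to $\mathfrak{sl}_2(\mathbb{F})\oplus\mathbb{F}^{n-3}$ via the remark following Lemma~\ref{lemia3} since $m$ is forced to be reducible. No gaps; this matches the paper's intended reasoning exactly.
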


\begin{proposition}
    Let $L$ be a Leibniz algebra with an abelian ideal of codimension two. Then $L$ is solvable.
\end{proposition}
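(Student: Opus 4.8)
The plan is to reduce the claim to the elementary fact that every two-dimensional Leibniz algebra is solvable, and then to combine this with the abelianness of $A$ through the standard solvable-extension argument.

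First I would record that any two-dimensional Leibniz algebra $Q$ is solvable. If $Q$ is a Lie algebra this is classical. Otherwise $I_Q\neq 0$; since $I_Q$ is an ideal with $[I_Q,Q]=0$, it is in particular abelian (as $[I_Q,I_Q]\subseteq[I_Q,Q]=0$), while the quotient $Q/I_Q$ is a Lie algebra of dimension at most one and hence abelian. Consequently $[Q,Q]\subseteq I_Q$ and $Q^{(2)}\subseteq[I_Q,I_Q]=0$, so $Q$ is two-step solvable.

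Next, I would apply this to $Q=L/A$, which has dimension $n-(n-2)=2$. This yields an integer $k$ with $(L/A)^{(k)}=0$. Since $(L/A)^{(j)}=(L^{(j)}+A)/A$ for all $j$, the vanishing $(L/A)^{(k)}=0$ means exactly $L^{(k)}\subseteq A$. As $A$ is abelian, $L^{(k+1)}=[L^{(k)},L^{(k)}]\subseteq[A,A]=0$, so $L^{(k+1)}=0$ and $L$ is solvable.

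I expect essentially no obstacle: this is the usual statement that an extension of a solvable quotient by a solvable (here even abelian) ideal is solvable, specialized to the case of a two-dimensional quotient. The only step deserving any care is the elementary verification that a two-dimensional non-Lie Leibniz algebra is solvable, where the ideal $I_Q$ and the identity $[I_Q,Q]=0$ do all the work; this is the reason the argument does not require the hypothesis $p\neq 2$ and holds over an arbitrary field.
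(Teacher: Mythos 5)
Your proof is correct, but it follows a genuinely different route from the paper's. The paper works entirely inside $L$: writing $B$ for the abelian ideal, it splits into cases according to the position of $I_L$ (namely $I_L\subseteq B$; $L\neq I_L+B\neq B$; $L=I_L+B$) and in each case computes the derived series by hand, obtaining $L^{(3)}=0$ (and even $L^{(2)}=0$ in the last case). You instead pass to the quotient: $L/A$ is a two-dimensional Leibniz algebra, every such algebra is solvable of derived length at most $2$ (your argument via $I_Q$ and the fact that $Q/I_Q$ is a Lie algebra of dimension at most one is sound, and $[I_Q,Q]=0$ gives $[I_Q,I_Q]=0$), and then the standard identity $(L/A)^{(j)}=(L^{(j)}+A)/A$ together with $[A,A]=0$ forces $L^{(3)}=0$. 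Both arguments use that the ideal is two-sided — yours so that $L/A$ is a Leibniz algebra, the paper's so that products like $[x,B]$ and $[B,x]$ land in $B$ — and neither needs $p\neq 2$ or any hypothesis on the field. What your approach buys is the elimination of all case analysis and an immediate generalization: any abelian (or merely solvable) ideal with solvable quotient gives a solvable $L$, so in particular the codimension-two hypothesis only enters through the elementary fact that Leibniz algebras of dimension at most two are solvable. What the paper's computation buys is slightly finer case-by-case information (e.g.\ $L^{(2)}=0$ when $L=I_L+B$) without invoking the quotient formalism, though the bound on the derived length, $L^{(3)}=0$, is the same in both.
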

\begin{proof}
    Let $B$ be an abelian ideal of codimension two. If $I_L \subset B$, write $L = B + \mathbb{F}x + \mathbb{F}y$. Then $L^2 \subseteq B + \mathbb{F}[x, y]$ and $L^{(2)} \subseteq B$ and $L^{(3)} = 0$.    
    If $L, B\neq I_L+B$. Then $L = \mathbb{F}x + I_L + B$  and we have $L^2 \subseteq I_L + B$, $L^{(2)} \subseteq B$ and $L^{(3)} = 0$. Finally, if $L = I_L+ B$, then $L^{(2)} = 0$.
\end{proof}

Combining the results from section~\ref{sec2} and section~\ref{sec3}, we have proven Theorem~\ref{mainthm}.


\begin{thebibliography}{9}

\bibitem{AOR20}
Ayupov, S., Omirov B., Rakhimov, I., Leibniz Algebras, Structure and Classification, CRC Press, Taylor and Francis Group (2020).


\bibitem{A76b}
Amayo R. K.
Quasi-ideals of Lie algebras II, {\it Proc. London Math. Soc.}, (3) 33 (1976) 37–64.

\bibitem{BCFN24}
Bouarroudj, S., Calderón, A. J., Fernández Ouaridi, A., Navarro, R.M., Abelian Subalgebras and Ideals of Maximal Dimension in Solvable Leibniz Superalgebras. {\it Mediterranean J. of Mathematics}, {\bf 21}(3), 89 (2024).


\bibitem{BC12} Burde,  D., Ceballos,  M., Abelian ideals of maximal dimension for solvable Lie
algebras.  {\it J. Lie Theory}, {\bf 22}(3),  741--756 (2012).



\bibitem{Ceballos} Ceballos,  M., Abelian subalgebras and ideals of maximal dimension in Lie algebras. PhD dissertation. (2012). 

\bibitem{CNT15}
Ceballos, M., Núñez, J., Tenorio, Á. F. Abelian subalgebras on Lie algebras. {\it Communications in Contemporary Mathematics}, 17(4) (2015).

\bibitem{CT14} Ceballos,  M., Towers,  D.A., On abelian subalgebras and ideals of maximal dimension in supersolvable Lie algebras.  {\it J. Pure Appl.  Algebra}, {\bf 218}(3), 497--503 (2014).

\bibitem{CT23} Ceballos,  M.,  Towers,  D.A., Abelian Subalgebras and Ideals of Maximal Dimension in Solvable Leibniz Algebras.  {\it  Mediterranean J. of Mathematics }, 20:97 (2023). 





\bibitem{FNT24}
Fernández Ouaridi, A., Navarro, R.M., Towers D.A., Abelian Subalgebras and Ideals of Maximal Dimension in Poisson algebras. {arXiv: 2405.05859}. (2024). 





\bibitem{ST68}
Suprunenko, D.A., Tyshkevich, R.I, Commutative matrices. Academic Press, London. (1968)

\bibitem{T22} Towers, D.A., Abelian subalgebras and ideals of maximal dimension in supersolvable and nilpotent Lie algebras.  {\it Linear Multilinear Algebra}, {\bf 70} (13) (2022).  




\end{thebibliography}
\end{document}